\let\oldcite\cite
\renewcommand{\cite}[1]{\textcolor{blue}{\oldcite{#1}}}
\let\oldcitep\citep
\renewcommand{\citep}[1]{\textcolor{blue}{\oldcitep{#1}}}
\let\oldcitet\citet
\renewcommand{\citet}[1]{\textcolor{blue}{\oldcitet{#1}}}
\newtheorem{theorem}{Theorem}[section]
\newtheorem{proposition}[theorem]{Proposition}
\newtheorem{prop}[theorem]{Proposition}
\newtheorem{lemma}[theorem]{Lemma}
\theoremstyle{definition}
\newtheorem{definition}[theorem]{Definition}
\newtheorem{example}[theorem]{Example}
\theoremstyle{remark}
\newtheorem{remark}[theorem]{Remark}
\newtheorem{defremark}{Definition/Remark}
\newcommand{\nd}{\noindent}
\newcommand{\norm}[1]{\left\lVert#1\right\rVert}
\newcommand{\cl}{\mathbb{R}}
\newcommand{\ddim}{\mathbb{R}^d}
\newcommand{\Ddim}{\mathbb{R}^D}
\newcommand{\bbn}{\mathbb{N}}
\newcommand{\df}{\coloneqq}
\newcommand{\coloneqq}{:=}
\newcommand{\setR}{\mathbb{R}}
\def\cC{\mathcal C}
\begin{document}
\pagenumbering{arabic}

\title[Graph Laplacian asymptotics near isolated singularities]{\large \textbf{Asymptotics of the graph Laplacian near an isolated singularity}}
\date{\today}
%\author{Susovan Pal}
%\address{S.~Pal: Department of Mathematics and Data Science, Vrije Universiteit Brussel, Pleinlaan 2, B-1050 Elsene, Belgium.}
%\email{susovan.pal@vub.be, susovan97@gmail.com}

\maketitle
\begin{center}
  \textbf{Susovan Pal} \\
  Department of Mathematics and Data Science, Vrije Universiteit Brussel (VUB) \\
  Pleinlaan 2, B-1050 Elsene/Ixelles, Belgium \\
  \texttt{susovan.pal@vub.be, susovan97@gmail.com}
  
\end{center}
%\tableofcontents
\begin{abstract}
    \nd We investigate the asymptotics of the graph Laplacian on a Riemannian manifold that admits an isolated singularity $x$. This operator, and its variants underlie kernel methods, manifold learning, and spectral constructions based on sampled data. We show that under suitably controlled curvature growth near $x$, the graph Laplacian at $x$ converges to the weighted Laplace--Beltrami operator as the bandwidth $t$ approaches zero. On the other hand, under a conformal change of the Riemannian metric whose conformal factor depends only on the angular variable near $x$, we show that the curvature growth condition is not satisfied, and the graph Laplacian grows like $O\!\left(\frac{1}{\sqrt{t}}\right)$ as $t\downarrow 0$. Numerical simulations validate these asymptotics.
\end{abstract}
%\maketitle
% Keywords
\keywords{Isolated singularities; graph Laplacians; conformal metrics; manifold learning; singular spaces.}\\
\smallskip
\nd \textbf{Word count:} 10,350.

\section{Introduction}

\subsection{Graph Laplacian and its Continuous Limit}

Let $(M,d,\mu)$ be a metric measure space where $d$ is a metric on $M$ and $\mu$ is a \textit{non-atomic} Borel measure on $M$. Assume that the metric measure space has a finite Hausdorff dimension, denoted again by $d$.
In particular, $(M,d,\mu)$ may arise as a smooth $d$-dimensional Riemannian manifold $(M,g)$ endowed with its geodesic distance $d_g$ and volume measure $\mathrm{vol}_g$ induced by the Riemannian metric $g$, such that $\mu = \mathrm{vol}_g$.  
It is assumed that $\mu(M)< \infty.$ 

Let $(\Omega, \mathcal{F}, \mathbb{P})$ be a probability space with probability measure $\mathbb{P}$, and let $X\colon (\Omega, \mathcal{F}, \mathbb{P})\to (M,d,\mu)$ be a random variable, that is, a Borel measurable function. Let $P_X$ be the pushforward of $\mathbb{P}$ to $M$ by $X$, that is, $P_X(E):=\mathbb{P}(X^{-1}(E))$ for all $E \in \mathcal{B}(M)$, the Borel sets of $M$.
Assume that $P_X$ has a continuous density $p$ with respect to $\mu$, that we will assume to be smooth in case the metric measure space becomes a smooth manifold.
Let $(X_i)_{i=1}^n$ be i.i.d.\ $M$-valued sampled from $P_X$ and let $k_t$ denote the Gaussian kernel
\[
k_t(x,y) = \exp\!\left(-\frac{d(x,y)^2}{t}\right),
\]
where $t>0$ denotes the bandwidth parameter. All dimensional constants are computed under this convention.

\paragraph{\textbf{Unnormalized graph Laplacian: discrete and continuous.}}
Following the graph Laplacian framework used in \citet{PalTewodrose2025} and \citet{BelkinQueWangZhou2012} for Riemannian manifolds, possibly with boundary or singularities, we define the \emph{unnormalized graph Laplacian} on the metric measure space acting on a bounded measurable function $f\colon M\to\mathbb{R}$ by
\begin{equation}\label{eq:discrete_GL}
L_{n,t}f(x)
= \frac{1}{n\,t^{d/2+1}}\sum_{j=1}^n 
  e^{-\frac{d(x,X_j)^2}{t}}\big(f(x)-f(X_j)\big),
\end{equation}
where $d(.,.)$ is the given metric $d(.,.)$ on $M$, and $d\in \mathbb{N}$ is the Hausdorff dimension of the underlying metric space $(M,d).$ The corresponding deterministic—or continuous- graph Laplacian is obtained by taking the expectation with respect to $P_X$. Denoting the density (i.e. Radon-Nikodym derivative) of $P_X$ with respect to $\mu$ by $p$, we write:
\begin{definition}[\textbf{continuous graph Laplacian}]\label{eq:continuous_GL}
\[
L_t f(x)
= \frac{1}{t^{d/2+1}} \int_M
  e^{-\frac{d(x,y)^2}{t}}\big(f(x)-f(y)\big) p(y)\,d\mu(y).
\]
\end{definition}
By the strong law of large numbers, the random operator $L_{n,t}f(x)$ converges almost surely to $L_tf(x)$ for each fixed $t>0$ and fixed $x\in M,$ as the sample size $n\to\infty$. The above is a nonlocal Laplace-type operator with Gaussian kernel.

\begin{remark}\label{rmk:almost everywhere defined}
    Note that because $\mu$ is\textit{ non-atomic},  $\mu(\{x\})=0,$ so we have
    \[
    L_t f(x)
= \frac{1}{t^{d/2+1}} \int_{M\setminus \{x\}}
  e^{-\frac{d(x,y)^2}{t}}\big(f(x)-f(y)\big) p(y)\,d\mu(y).
    \]

\nd This is used later in Definition/Remark \ref{def:GL for loc. ang. conformal metrics}.
\end{remark}

\paragraph{\textbf{Intrinsic and extrinsic unnormalized graph Laplacians:}}
When we work intrinsically, the underlying space $M$ is viewed through its Riemannian structure, and the kernel is built from the intrinsic distance $d(\cdot,\cdot)$, namely the geodesic distance induced by the Riemannian metric. In many situations, however, the data are observed only in the ambient Euclidean space $\Ddim$. In that case, the same underlying set $M\subset \Ddim$ may instead be viewed extrinsically as a metric subspace of $\Ddim$, endowed with the ambient Euclidean distance rather than its intrinsic Riemannian distance. We then work with
\[
d(x,y):=\norm{x-y}_{\Ddim},
\]
and the Gaussian kernel is correspondingly denoted by
\[
k_t^{\operatorname{ext}}(x,y) = \exp\!\left(-\frac{\norm{x-y}_{\Ddim}^2}{t}\right).
\]
\noindent
The definitions of the discrete and continuous unnormalized graph Laplacians then change accordingly.

\begin{remark}[\textbf{Abuse of notation}]
    Note that we use $d$ both for the given metric on $M$ and for the Hausdorff dimension of the underlying metric space $(M,d)$. However, in most of this paper the distance is induced by a Riemannian metric $g$, and we write $d_g$ for this distance; hence no confusion should occur.
\end{remark}

\begin{remark}[\textbf{Dropping the terms '\textit{unnormalized}' and focusing only on the continuous graph Laplacian}]
    From now on and for the rest of this paper, we will only work with the \textit{continuous} graph Laplacian and we will drop the term \textit{'unnormalized'} and call it simply the \textit{graph Laplacian}. Therefore, unless otherwise specified, we will \textit{always} mean by graph Laplacian the continuous and unnormalized graph Laplacian $L_t$. Indeed, there are other types of graph Laplacian that are named \textit{random walk} and \textit{normalized} graph Laplacians whose asymptotics on Riemannian manifolds without boundaries were treated in \citet{HeinAudibertLuxburgJMLR2007}, but these are not the focus of this paper. Further when we deal with the locally angular conformal metric $g$ to a given metric $\tilde{g},$ we shall use 
    
\[
L_t^{g, int} f(x)
= \frac{1}{t^{d/2+1}}
  \int_M e^{-d_g(x,y)^2/t}\bigl(f(x)-f(y)\bigr)p(y)\,d\mathrm{vol}_g(y), 
  \,
L_t^{g, ext} f(x)
= \frac{1}{t^{d/2+1}}
  \int_M e^{-\frac{\norm{x-y}^2_{\Ddim}}{t}}\bigl(f(x)-f(y)\bigr)p(y)\,d\mathrm{vol}_g(y).  
\]

\nd to emphasize the intrinsic and extrinsic operators with respect to $g$.
\end{remark}

\paragraph{\textbf{Asymptotic behavior of graph Laplacian for manifolds without boundaries.}}
The small-bandwidth limit $t \to 0$ of~(2) determines how $L_t$ approximates the smooth and Riemannian structure of $M$.
These asymptotics matter for spectral methods used in geometric recovery, e.g. spectra-based manifold learning.
For smooth manifolds without a boundary, \citet{HeinAudibertLuxburgJMLR2007} (see Theorem 30), \citet{HeinAudibertLuxburgNIPS2005} (see Theorem 1), and the related work of
\cite{BelkinNiyogi2008} (see Theorem 3.3, and the derivation on P. 1301) showed that, for sufficiently regular $f$ and $p$,
\[
L_t f(x)
=
-\frac{\pi^{d/2}}{2}
\Big(
\tfrac12\,p(x)\,\Delta_g f(x)
+
\langle \nabla p(x), \nabla f(x) \rangle_g
\Big)
+ o(1),
\qquad
t \to 0,
\]
where $\Delta_g$ is the Laplace--Beltrami operator.
In particular, when $M$ is compact without boundary and $p$ is uniform,
$L_t$ pointwise approximates $-\tfrac{\pi^{d/2}}{4\,\mathrm{vol}_g M}\,\Delta_g$ as $t \to 0$. Note that compactness is only used for the special uniform-density simplification since $M$ must have finite volume to define uniform density, but compactness is not required for the general pointwise asymptotic results.
\nd We also note that the Diffusion Maps normalization of \citet{CoifmanLafon2006} recovers the Laplace–Beltrami operator independently of the sampling density, under the usual smooth compact-manifold assumptions.

\paragraph{\textbf{Extension to manifolds with smooth and non-smooth boundaries.}}
%%%%%

\nd The results for boundaryless manifolds have also been extended to manifolds with smooth and non-smooth boundaries. Near a smooth or non-smooth boundary, the graph Laplacian normally acquires a first-order term governed by the local inward sector; at interior points this term vanishes by symmetry and one recovers the density-weighted Laplace–Beltrami operator. Related smooth boundary-focused work includes \citet{BerrySauer2017BoundaryDensity} on density estimation on manifolds with boundary, and \citet{VaughnBerryAntil2024DiffusionBoundary} on diffusion maps for embedded manifolds with boundary and applications to PDEs. See also \citet{CalderParkSlepcev2022BoundaryEstimation} for boundary estimation from point clouds, and \citet{JiangHarlim2023GhostPointDM} for a ghost-point diffusion-maps approach to elliptic PDEs with classical boundary conditions.

%%%%

In the setting of manifolds with kinks introduced in \citet{PalTewodrose2025}, which are manifolds with certain types of non-smooth boundaries that allow smooth boundaries as special cases, the same definition \eqref{eq:discrete_GL} applies. For a suitable definition of the Riemannian metric and its metric distance on manifolds with kinks, see \citet{PalTewodrose2025}.  
Denoting by $I_xM$ the inward tangent sector at $x\in M$ which is a generalization of the Bouligand tangent cone studied in convex analysis, and by $S^g_{I_xM}$ the corresponding inward tangent sphere w.r.t. $g$, Theorem 1.1 in \citet{PalTewodrose2025}  established that for intrinsic graph Laplacians:
\[
L_t f(x)
= -\frac{C_d}{\sqrt{t}}
   \big(p(x)\,B_{v_g(x)}f(x)+o(1)\big)
   -C_{d+1}\big(p(x)A_g f(x) + r(p,f)_g(x)\big)
   + O(\sqrt{t}),
   \qquad t\to0,
\]
where the term $o(1)$ becomes zero for interior points, $B_{v_g(x)}f$ denotes the derivative of $f$ in the \textit{generalized normal} direction $v_g(x)$ given by the \textit{mean direction} $v_g(x):=\int_{S^g_{I_xM}} \theta d\sigma(\theta),$ where $\sigma$ denotes the Hausdorff measure on the unit sphere $S^g_{I_xM}$ induced by $g,$ and 
\[
A_g f(x) = \frac{1}{2}\int_{S^g_{I_xM}}
   \operatorname{Hess}_{\tilde{g}}  f(x)(\theta,\theta)\,d\sigma(\theta),
   \qquad
r(p,f)_g(x) = \int_{S^g_{I_xM}}
   \langle\nabla f(x),\theta\rangle
   \langle\nabla p(x),\theta\rangle\,d\sigma(\theta),
\]
and $C_d:=\frac{1}{2}\Gamma(\frac{d+1}{2})$ is an explicit, positive constant depending only on the dimension $d \in \bbn$.  
Thus, boundary points, corner points, and more generally \textit{kinks} yield additional boundary terms proportional to $t^{-1/2}$ involving the \textit{generalized normal derivative} $B_{v_g(x)}f(x)$, namely the directional derivative along $v_g$. These additional terms vanish if one works with $f$ satisfying the \textit{generalized Neumann condition} $B_{v_g(x)}f(x)=0$. \citet{PalTewodrose2025} also proved a similar result for the extrinsic graph Laplacian, see Theorem 1.2 there.

\begin{remark}[\textbf{Identification of constants at interior points}]
For $x$ an interior point, $I_xM = T_xM$ above and hence $S^g_{I_xM} \cong \mathbb{S}^{d-1}$. By spherical symmetry,
\(
A_g f(x)
= \frac12 \frac{\mathrm{vol}(\mathbb{S}^{d-1})}{d}\,\Delta_g f(x),
r(p,f)_g(x)
= \frac{\mathrm{vol}(\mathbb{S}^{d-1})}{d}\,
\langle \nabla p(x), \nabla f(x) \rangle_g .
\)
Using the identity $C_{d+1} = \frac{d}{2} C_{d-1}$ together with
$\mathrm{vol}(\mathbb{S}^{d-1}) C_{d-1} = \pi^{d/2}$, we obtain
\(
C_{d+1}\frac{\mathrm{vol}(\mathbb{S}^{d-1})}{d}
= \frac{\pi^{d/2}}{2}.
\)
Consequently, the interior-point asymptotic in
\citet{PalTewodrose2025} reduces exactly to the
 constant in \citet{BelkinNiyogi2008}
\(
L_t f(x)
=
-\frac{\pi^{d/2}}{2}
\Big(
\tfrac12\,p(x)\,\Delta_g f(x)
+
\langle \nabla p(x), \nabla f(x) \rangle_g
\Big)
+ o(1),
\, t \to 0 .
\)
\end{remark}

\nd These results show that the unnormalized continuous graph Laplacian approximates the Laplace--Beltrami operator on smooth (meaning $\cC^{\infty}$) manifolds without boundary, and has natural extensions involving first-order operators on singular spaces such as manifolds with kinks. From the contrasting asymptotic behavior at the interior or boundary points, one may have the intuition that near \textit{singular} points (see Definition \ref{isolated/point singularity}), the graph Laplacian's asymptotic behavior is drastically different from non-singular points. In this paper, we investigate a specific type of singularity different from boundary or corner points and examine the behavior of the graph Laplacian at it; we notice that the behavior above is not\textit{ always} drastically different, but there are cases where it is so. The theorems in this study precisely illustrate these similarities and differences.

\subsection{Singularities in question: \textit{isolated} singularities}

As a general rule in this paper, by a \textit{singularity} of an incomplete metric space $M$ we mean a point in the metric completion $\widetilde{M}$ having a neighborhood whose intersection with $M$ is homeomorphic to a punctured Euclidean ball. We call this an \textit{isolated} or \textit{point} singularity (Definition \ref{isolated/point singularity}), following \citet{SmithYang1992}.

\subsubsection{\textbf{Asymptotics of graph Laplacian at isolated singularities}}

\citet{SmithYang1992} showed that the smooth (meaning $\cC^{\infty}$) manifold structure, Riemannian metrics, and exponential maps all extend across an isolated singularity, provided that the curvature function has suitably controlled blow-up (see Definition \ref{def:controlled-blow-up of curvature}); see Theorem \ref{extending smooth structure and metric for dim 3}. It is noteworthy that they worked with a more general class of Riemannian metrics, namely \textit{$\cC^1$ metrics with continuous Riemann curvature tensors}, whereas in this paper we restrict attention to smooth Riemannian metrics.\\\\

\nd One can ask about the asymptotic behavior of the graph Laplacians at that isolated singularity as $t\downarrow 0$. Below are our main theorems that answer this question:

\subsection{Main results}\label{scn:main-results}

\nd The main results of this study are divided along two distinct regimes:

\nd \textbf{Regime A (Riemannian metric extendable across isolated singularities).} Under the controlled curvature blow-up condition (Definition \ref{def:controlled-blow-up of curvature}) and Hölder regularity of the Ricci curvature (Theorem \ref{extending smooth structure and metric for dim 3}), the completion $\widetilde{M}$ carries a smooth structure and the metric extends across the singular point. In this regime, the isolated singularity becomes an interior point of $\widetilde{M}$ and the small-bandwidth limit of the graph Laplacian agrees with the classical interior-point expansion (Theorem \ref{thm:Asymptotics of Continuous graph Laplacian}).\\
\nd \textbf{Regime B (Riemannian metric non-extendable across isolated singularities via purely angular conformal change).} If one locally modifies a smooth background metric by a \textit{purely angular} conformal factor near the singularity (see Definition \ref{def:locally angularly conformal}, which is a new definition where the conformal factor is a function of the angle or direction\textit{ only}) and the factor is non-constant, then the metric does not extend across the singularity (Proposition \ref{prop:extendability}) and the curvature blow-up is necessarily non-integrable as in \citet{SmithYang1992}(Theorem \ref{thm:LAC_SY}). In this regime, the intrinsic and extrinsic graph Laplacians exhibit $t^{-1/2}$ growth rate under a mild non-degeneracy condition (Theorems  \ref{thm:blowup of GL near isolated singularity} and \ref{thm:TE extrinsic GL}). Below, we present the results for these two regimes.

\begin{theorem}\label{thm:Asymptotics of Continuous graph Laplacian}\textbf{(Asymptotics of the continuous graph Laplacian near isolated singularity of a manifold)}
    Let $M$ be a $d$-dimensional Riemannian manifold with an isolated singularity at $x.$\\
    \textbf{Case I:} Assume that $M$ has an intrinsic Riemannian metric $g,$ inducing a metric (distance) $d_g.$\\
    \textbf{Case II:} Assume that $M$ is isometrically embedded into $\Ddim,$ and thus, its Riemannian metric $g$ is induced from $\Ddim,$ and this induces a metric distance $d_g$ as in \textbf{ Case I}).\\
    
    \nd In both cases, $\widetilde{M}$ represents the metric completion of $M$ with respect to $d_g.$\\
      
    \nd Furthermore, in \textbf{Case II}, assume that if the metric completion $\widetilde{M}$ is a Riemannian manifold with the extended Riemannian metric denoted by $g$ again, then it is \textit{also} isometrically embedded into $\Ddim$ as a Riemannian submanifold. \label{embedding-assumption} (this is not always the case, see Remark \ref{rmk:explantion-embedding} below).\\

    \nd Next, in both cases above, assume the hypotheses about the singularity and henceforth, the conclusion, of Theorem \ref{extending smooth structure and metric for dim 3} when $d\ge 3,$ and the same for the Theorem \ref{extending smooth structure and metric for dim 2} when $d=2$. Then for both the \textit{intrinsic} (cf. \textbf{Case I}) above and \textit{extrinsic} graph Laplacians (cf.\textbf{ Case II}) at the isolated singularity $x$, both denoted by $L_tf(x),$ we have:

\[
L_t f(x)= -\,\frac{\pi^{d/2}}{2}\Big(\nabla_g f(x)\!\cdot\!\nabla_g p(x)\;+\;\tfrac12\,p(x)\,\Delta_g f(x)\Big) + O(\sqrt{t}),
\qquad \text{as } t\downarrow 0.
\]

\end{theorem}

\begin{remark}\label{rmk:explantion-embedding}
    The intrinsic statement requires no ambient embedding, whereas the extrinsic statement assumes an embedding of $\widetilde{M}.$
    In Theorem \ref{embedding-assumption}, we did \textit{not} assume that $\widetilde{M}$ is a smooth manifold. \citet{SmithYang1992} provided sufficient conditions (see Theorem \ref{extending smooth structure and metric for dim 3}) for when $\widetilde{M}$ carries a smooth structure with a $\cC^1$ Riemannian metric, but these conditions state nothing about $\widetilde{M}$ being embedded as a Riemannian \textit{submanifold} of $\Ddim$ even if $M$ is so. This is the point behind the seemingly strong extra embedding assumption in Theorem \ref{thm:Asymptotics of Continuous graph Laplacian}. We assume that \textit{whenever} $\widetilde{M}$ is a smooth Riemannian manifold, it is \textit{also} an isometrically embedded Riemannian submanifold of $\Ddim.$ A simple counterexample shows this is not automatic: the cone $M=\{(x,y,z)\in\mathbb{R}^3:x^2+y^2=z^2,\ z>0\}$ is a smooth embedded submanifold of $\mathbb{R}^3$, but its metric completion adds the apex, and the completed set, although an abstract manifold in its own right, is no longer a smooth Riemannian submanifold of $\mathbb{R}^3$, hence does not satisfy the  extra embedding assumption in Theorem \ref{thm:Asymptotics of Continuous graph Laplacian}. The extra assumption is what additionally allows us to regard it as an embedded Riemannian submanifold of $\Ddim$, and hence to define the \textit{extrinsic} graph Laplacian at $x.$ Without this assumption, we \textit{cannot} define the \textit{extrinsic} graph Laplacian at $x.$
\end{remark}
\begin{comment}
\nd In essence, the theorem above says that under the hypotheses, the Riemannian metric extends at an isolated singularity, and the asymptotic behavior of the graph Laplacian is the same as that at an interior point of $\widetilde{M}$. In this regard, while the work of \citet{SmithYang1992}, as summarized in Theorem \ref{extending smooth structure and metric for dim 3} gives us sufficient geometric conditions for these similarity of asymptotic behaviors, one may wonder what will happen when we omit the hypothesis of controlled curvature blow-up (cf. Definition \ref{def:controlled-blow-up of curvature}) assumed in this work, or more strongly, the condition of non-extendibility of the Riemannian metric across the singularity. To start with, the following proposition shows that there is \textit{only one way} a locally angularly conformal metric $g$(cf. Definition \ref{def:locally angularly conformal}), which is conformal to a smooth Riemannian metric $\tilde{g}$ with the conformal factor being a function of the \textit{angle} only, extends across the singularity: namely when the angular conformal factor is \textit{constant}. Denote by $U_x\widetilde M$ the unit sphere in the tangent space $T_x\widetilde{M}$ with respect to the (initial) Riemannian metric $\tilde{g}.$

\end{comment}

 \begin{proposition}[\textbf{Extendability of locally angularly conformal metrics $\Leftrightarrow$ constant angular conformal factor}]\label{prop:extendability}
Let $(\widetilde M,\widetilde g)$ be a smooth Riemannian manifold and let $x\in\widetilde M$. Let $g$ be locally angularly conformal to $\tilde{g}$ near $x$. The following statements are equivalent:
\begin{enumerate}
    \item The metric $g$ extends across $x$ as a continuous positive-definite symmetric bilinear form on $T_x\widetilde M$; equivalently, there exists an $R>0$ so that $g$ extends to a continuous Riemannian metric on the $\tilde{g}$-ball $B_{\widetilde g}(x;R)$.
    \item The angular conformal factor $\Psi_1\in \cC^2(U_x\widetilde{M})$ is constant on the $\tilde{g}$-unit sphere $U_x\widetilde M$ in the tangent space $T_x\widetilde M$.
\end{enumerate}
%In other words, $g$ is extendable across $x$ if and only if $\Psi_1 \equiv c$ for some $c\in \cl.$
\end{proposition}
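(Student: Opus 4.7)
The plan is to treat the two implications separately; both are short once one works in $\widetilde g$-normal coordinates centered at $x$, and the real content lies in the reverse direction. For the easy direction $(2)\Rightarrow(1)$, suppose $\Psi_1\equiv c$ is constant (necessarily $c>0$). Then by the definition of a locally angularly conformal metric, $g=c\,\widetilde g$ on the punctured ball $B_{\widetilde g}(x;R)\setminus\{x\}$; since $c\,\widetilde g$ is a smooth Riemannian metric on all of $B_{\widetilde g}(x;R)$, it supplies a continuous (in fact smooth) extension of $g$ across $x$ as a positive-definite symmetric bilinear form on $T_x\widetilde M$.

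For $(1)\Rightarrow(2)$ I would pass to $\widetilde g$-normal coordinates around $x$ on $B_{\widetilde g}(x;R)$, in which every $y\neq x$ has a unique presentation $y=\exp_x^{\widetilde g}(r\theta)$ with $r\in(0,R)$ and $\theta\in U_x\widetilde M$. By the angular-conformality hypothesis, in these coordinates the coefficient matrices of $g$ and $\widetilde g$ are related at $y$ by
\[
[g_{ij}(y)] \;=\; \Psi_1(\theta)\,[\widetilde g_{ij}(y)],
\]
and a standard property of Riemannian normal coordinates gives $[\widetilde g_{ij}(y)]\to I_d$ as $y\to x$. Consequently, along the radial ray in direction $\theta$, the coefficient matrix $[g_{ij}(y)]$ tends to $\Psi_1(\theta)\,I_d$, i.e., the bilinear form $g_y$, transported back to $T_x\widetilde M$ via the identification from normal coordinates, converges to $\Psi_1(\theta)\,\widetilde g_x$.

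Now, if $g$ extends continuously at $x$ to a form with coefficient matrix $[g_{ij}(x)]$, then this matrix must coincide with $\lim_{y\to x}[g_{ij}(y)]$ along \emph{every} sequence $y\to x$. Comparing the radial limits along two arbitrary directions $\theta_1,\theta_2\in U_x\widetilde M$ then forces
\[
\Psi_1(\theta_1)\,I_d \;=\; [g_{ij}(x)] \;=\; \Psi_1(\theta_2)\,I_d,
\]
so $\Psi_1(\theta_1)=\Psi_1(\theta_2)$ for all $\theta_1,\theta_2$, and $\Psi_1$ is constant on $U_x\widetilde M$. I do not expect a genuine obstacle here; the only subtle point is selecting coordinates in which the $\theta$-dependence of $\Psi_1$ survives the passage to the radial limit, and $\widetilde g$-normal coordinates achieve exactly this thanks to $[\widetilde g_{ij}(y)]\to I_d$.
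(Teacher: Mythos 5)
Your proof is correct and follows essentially the same route as the paper's: the forward direction is the trivial observation that a constant-factor multiple of $\widetilde g$ extends, and the reverse direction compares radial limits of $g$ along distinct directions $\theta\in U_x\widetilde M$ (you phrase this via $[\widetilde g_{ij}(y)]\to I_d$ in normal coordinates, the paper via continuity of the tensor $\widetilde g$ at $x$, which is the same fact). One cosmetic point: per Definition \ref{def:locally angularly conformal} the conformal factor is $e^{\Psi_1(\theta)}$ rather than $\Psi_1(\theta)$ itself, so your displayed identity should read $[g_{ij}(y)]=e^{\Psi_1(\theta)}[\widetilde g_{ij}(y)]$ and the positivity caveat on $c$ is unnecessary — but since $e^{\Psi_1}$ is constant iff $\Psi_1$ is, the argument is unaffected (the paper's own proof contains the same slip).
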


Note that the definition of 'locally angularly conformal metrics' is new, and it means that we modify the original Riemannian metric by a conformal factor which is a function of the direction or angle only, and not of the radius, see Definition \ref{def:locally angularly conformal}. In this case we rightly call it an \textit{angular conformal factor}. The following theorem states that the only way we can have a \textit{controlled curvature blow up} (cf. Definition \ref{def:controlled-blow-up of curvature}) in this case is when the angular conformal factor is\textit{ constant}. Denote by $\Pi\subset T_y\widetilde M$ a two-plane and $K_g(y,\Pi)$ its sectional curvature.

\begin{theorem}[\textbf{Locally angularly conformal metrics and controlled blow–up of curvature}]\label{thm:LAC_SY}
Let $g$ be a locally angularly conformal metric to $\tilde{g}$ with angular conformal factor $\Psi_1\in \cC^2(U_x\widetilde{M})$ .
For $0<s<\varepsilon<R$ set the curvature function (cf. Definition \ref{def:curvature function})
\[
\kappa_{\varepsilon}(s)\;:=\;\sup\bigl\{|K_g(y,\Pi)|:\ s<\widetilde d(x,y)<\varepsilon,\ \Pi\subset T_y\widetilde M\text{ a two–plane}\bigr\}.
\]
Then, for all dimensions $d\ge2$, the following hold.
\begin{enumerate}
\item[\emph{(i)}] If $\Psi_1$ is constant on the $\tilde{g}$-unit sphere $U_x\widetilde M$ in the tangent space $T_x\widetilde{M}$, then $\kappa_{\varepsilon}(s)=O(1)$ as $s\downarrow0$ and therefore $\displaystyle\int_0^\varepsilon s\,\kappa_{\varepsilon}(s)\,ds<\infty$.
\item[\emph{(ii)}] If $\Psi_1$ is not constant on $U_x\widetilde M$, then there exist $c,C,s_0>0$ such that
\[
\frac{c}{s^2}\ \le\ \kappa_{\varepsilon}(s)\ \le\ \frac{C}{s^2}\qquad(0<s<s_0),
\]
and consequently $\displaystyle\int_0^\varepsilon s\,\kappa_{\varepsilon}(s)\,ds=\infty$.
\end{enumerate}
In particular, the Smith–Yang integrability condition $\int_0^\varepsilon s\,\kappa_{\varepsilon}(s)\,ds<\infty$ holds if and only if $\Psi_1$ (equivalently, the angular conformal factor) is constant on $U_x\widetilde M$.
\end{theorem}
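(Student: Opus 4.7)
I would work in $\widetilde g$--geodesic polar coordinates $(r,\theta)$ centered at $x$, write $\widetilde g = dr^2 + r^2 h_r(\theta)$ with $h_r$ a smooth family of metrics on $S^{d-1}$ converging to the round metric $g_{S^{d-1}}$ as $r\downarrow 0$, and parametrize the angularly conformal metric as $g = e^{2u(\theta)}\widetilde g$ with $u\in C^\infty(U_x\widetilde M)$. The engine of both parts is a simple scaling observation: a function depending only on $\theta$ has $\widetilde g$--gradient of size $O(1/r)$, $\widetilde g$--Hessian of size $O(1/r^2)$ in any $\widetilde g$--orthonormal frame, and $\widetilde g$--Laplacian of size $O(1/r^2)$, with leading coefficients given by the corresponding spherical quantities at $\theta$.

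Part (i) is then immediate: if $\Psi_1$ is constant then $g = c\,\widetilde g$ for a positive constant $c$, so $K_g = c^{-1}K_{\widetilde g}$ extends continuously across $x$ and is uniformly bounded on a neighborhood of $x$, yielding $\kappa(s) = O(1)$ and finiteness of the Smith--Yang integral. For the upper bound in (ii) I would invoke the standard conformal-change identity: for a $\widetilde g$--orthonormal pair $(X,Y)$ spanning a two--plane $\Pi$,
\[
K_g(\Pi) = e^{-2u}\bigl\{K_{\widetilde g}(\Pi) - \mathrm{Hess}_{\widetilde g}u(X,X) - \mathrm{Hess}_{\widetilde g}u(Y,Y) - (Xu)^2 - (Yu)^2 + |\nabla_{\widetilde g} u|^2\bigr\}.
\]
Since $K_{\widetilde g}$ is bounded near $x$ and every $u$--term is $O(1/r^2)$ by the scaling principle, $|K_g(\Pi)| \le C/r^2$, hence $\kappa(s) \le C/s^2$.

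The lower bound is the heart of the matter, and I would handle it at the level of the Ricci tensor rather than trying to single out a particular 2--plane. The conformal-change formula reduces to
\[
\mathrm{Ric}_g(\partial_r,\partial_r) = \mathrm{Ric}_{\widetilde g}(\partial_r,\partial_r) - \Delta_{\widetilde g} u - (d-2)|\nabla_{\widetilde g} u|^2,
\]
because $\partial_r u \equiv 0$ and the radial Christoffel symbols $\Gamma^k_{rr}$ vanish in geodesic polar coordinates. Substituting the leading asymptotics yields $\mathrm{Ric}_g(\partial_r,\partial_r) = -r^{-2}\bigl[\Delta_{S^{d-1}} u(\theta) + (d-2)|\nabla_{S^{d-1}} u(\theta)|^2\bigr] + O(1)$. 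For $d \ge 3$, integrating the bracketed spherical function over $S^{d-1}$ gives $(d-2)\int_{S^{d-1}}|\nabla_{S^{d-1}} u|^2\,d\sigma > 0$ whenever $u$ is non-constant, so the bracket cannot vanish identically; in dimension $d = 2$ the Gauss--curvature identity $K_g e^{2u} = K_{\widetilde g} - r^{-2}u''(\theta) + O(1)$ and the observation that $u''\not\equiv 0$ on $S^1$ for non-constant $u$ give the same conclusion directly. Thus there exists $\theta_*$ at which $|\mathrm{Ric}_g(\partial_r,\partial_r)(r,\theta_*)| \ge c/r^2$, which forces at least one sectional curvature $K_g(\partial_r,e_i)(r,\theta_*)$ to be of order $1/r^2$ by averaging. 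Hence $\kappa(s) \ge c/s^2$, and $\int_0^\varepsilon s\kappa(s)\,ds$ diverges like $\int_0^\varepsilon s^{-1}\,ds$.

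The main obstacle is precisely the non-cancellation issue in the lower bound. Working with the Ricci tensor converts it into a one-line integration identity on $S^{d-1}$ (namely, $u$ non-constant $\Leftrightarrow\int_{S^{d-1}}|\nabla_{S^{d-1}} u|^2\,d\sigma > 0$), which is much cleaner than any attempt to single out a specific 2--plane a priori; the only remaining technical care is the treatment of the smooth corrections coming from $h_r - g_{S^{d-1}}$, which contribute only to the $O(1)$ remainder and do not affect the $1/r^2$ leading term.
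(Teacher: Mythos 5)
Your plan is correct, and both part (i) and the upper bound in (ii) coincide with the paper's argument: the paper likewise establishes the $O(1/r)$ and $O(1/r^2)$ scalings of $\nabla_{\widetilde g}u$ and $\mathrm{Hess}_{\widetilde g}u$ for a purely angular $u$ (via Jacobi-field expansions of the polar metric) and feeds them into the plane-wise conformal-change identity. One small slip: the correct identity is $K_g(\Pi)=e^{-2u}\bigl(K_{\widetilde g}(\Pi)-\mathrm{Hess}_{\widetilde g}u(X,X)-\mathrm{Hess}_{\widetilde g}u(Y,Y)+(Xu)^2+(Yu)^2-\|\nabla_{\widetilde g}u\|^2\bigr)$; your signs on the $(Xu)^2$, $(Yu)^2$ and $\|\nabla_{\widetilde g}u\|^2$ terms are reversed (the two versions agree only when $d=2$). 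Since you invoke the formula solely for the upper bound, through absolute values, nothing breaks. Where you genuinely depart from the paper is the lower bound. The paper stays at the level of sectional curvature: it picks $\Theta_0$ and a unit $W$ with $\mathrm{Hess}_S u(\Theta_0)[W,W]\neq 0$ (possible because $\mathrm{Hess}_S u\equiv 0$ would force $u$ constant on the compact sphere) and evaluates $K_g$ on the specific plane $\mathrm{span}\{\partial_r,E\}$, using that the two gradient terms combine into the non-positive quantity $-r^{-2}\|(\nabla_S u)_{\perp W}\|_h^2$. You instead trace over such planes: the radial Ricci identity $\mathrm{Ric}_g(\partial_r,\partial_r)=\mathrm{Ric}_{\widetilde g}(\partial_r,\partial_r)-\Delta_{\widetilde g}u-(d-2)\|\nabla_{\widetilde g}u\|^2$ together with the integral identity $\int_{S^{d-1}}\bigl[\Delta_S u+(d-2)|\nabla_S u|^2\bigr]\,d\sigma=(d-2)\int_{S^{d-1}}|\nabla_S u|^2\,d\sigma>0$ certifies a direction $\theta_*$ where the radial Ricci curvature is of size $r^{-2}$, and pigeonholing over the $d-1$ coordinate planes (after the harmless $e^{\pm 2u}$ renormalization between $\widetilde g$- and $g$-orthonormal frames, which you should state explicitly) produces the required two-plane. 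Your route is arguably more robust: the non-vanishing is guaranteed by a global integral identity rather than a pointwise sign analysis, and in particular it sidesteps the potential cancellation between $-\mathrm{Hess}_S u[W,W]$ and $-\|(\nabla_S u)_{\perp W}\|_h^2$ that the paper's pointwise choice must implicitly rule out. The price is that $d=2$ needs separate treatment (as you correctly do via $u''\not\equiv 0$ on $S^1$ for non-constant periodic $u$), whereas the paper's sectional argument is uniform in $d\ge 2$.
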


While the Proposition \ref{prop:extendability} and Theorem \ref{thm:LAC_SY} above give us an \textit{equivalent} condition of the extendability of the locally angularly conformal metric across an isolated singularity, it does \textit{not} address the asymptotic behavior of the graph Laplacian near $x$ for a \textit{non-extendable} locally angularly conformal metric, i.e. for a \textit{non-constant} angular conformal factor (cf. Proposition \ref{prop:extendability}). Before stating the theorem that addresses it, let us \textit{formally} define:

\begin{defremark}[\textbf{Graph Laplacian for locally angular conformal metrics}]\label{def:GL for loc. ang. conformal metrics}
    Noting that the locally angularly conformal metric $g$ may \textit{not} be defined at $x$ (cf. Proposition \ref{prop:extendability}), the volume form $vol_g$ may also not be defined at $x.$ See Remark \ref{rmk:almost everywhere defined}. Hence we \textit{extend} the measure given by $vol_g$ on $M\setminus \{x\}$ by giving zero mass to $x,$ (so that new measure on $M$ is still non-atomic) and \textit{still} call the resulting measure $vol_g.$ Hence we \textit{define} the graph Laplacian $L_t^gf(x)$ as:
    \[
     L_t^gf(x):= \frac{1}{t^{d/2+1}}\int_{M}e^{-\frac{d^2(x,y)}{t}}(f(x)-f(y))p(y)dvol_g(y):=  \frac{1}{t^{d/2+1}}\int_{M\setminus \{x\}}e^{-\frac{d^2(x,y)}{t}}(f(x)-f(y))p(y)dvol_g(y)
    \]
\end{defremark}

\nd Note that the last integral above makes perfect sense since $vol_g$ was defined on $M\setminus \{x\}.$ With that definition, the following theorem puts a \textit{mild} condition on the locally angularly conformal metrics to ensure blow up of the graph Laplacian at the isolated singularity as $t\downarrow 0$.\\

\begin{theorem}[\textbf{Taylor expansion of the intrinsic graph Laplacian near an isolated singularity for a non-constant locally angular conformal change}]\label{thm:blowup of GL near isolated singularity}
    Let $(\widetilde{M},\tilde{g})$ be a smooth Riemannian manifold and let $M:=\widetilde{M}\setminus\{x\}$. Let $g$ be a Riemannian metric on $M$ that is locally angularly conformal to $\tilde{g}$ near $x$, as in Definition \ref{def:locally angularly conformal}. Consider the $\tilde{g}$-unit sphere $U_x\widetilde{M}$ in $T_x\widetilde{M}$ with the Hausdorff measure $d\sigma(\Theta)$ induced by $\tilde{g}.$ Let $p$ be the density on $M$ w.r.t. $dvol_g, p(x)\neq 0$ that extends as a $\cC^2$ function on $\widetilde{M}$. Let $f\in C^3(\widetilde M)$ satisfy the conditions in the Proposition \ref{prop:truncation-error} with $\mu:=dvol_g.$     
\nd Let
$
y = \exp^{\tilde g}_x(r,\Theta), r\in[0,\rho),\ \Theta\in U_x\widetilde M,
$
be $\widetilde g$–geodesic normal coordinates at $x$. Assume that there exist $\rho>0, C>0, \delta >0$ and a positive continuous function 
$L:U_x\widetilde M\to(0,\infty)$ such that, for all $0<r<\rho$ and for all 
$\Theta\in U_x\widetilde M$,
\[
d_g\!\bigl(x,\exp_x^{\tilde g}(r\Theta)\bigr)^2
  = L(\Theta)^2 r^2 + E(r,\Theta), 
  \qquad |E(r,\Theta)|\le C r^{2+\delta}, 
\]
\nd Assume the following non-degeneracy conditions: 
    
\begin{equation}\label{eqn:condition-for-blowup}
p(x)\ne 0,\text{ and }
     \left\langle \nabla_{\tilde{g}}{f}(x),\int_{U_x\widetilde{M}}\Theta e^{\frac{d}{2}\Psi_1(\Theta)}L(\Theta)^{-(d+1)}d \sigma(\Theta) \right\rangle_{\tilde{g}} 
\ne 0.  
\end{equation}

\nd For $t>0$ define the intrinsic continuous graph Laplacian w.r.t. the metric $g$ by
\[
L_t^{g, int} f(x)
= \frac{1}{t^{d/2+1}}
  \int_M e^{-d_g(x,y)^2/t}\bigl(f(x)-f(y)\bigr)p(y)\,d\mathrm{vol}_g(y).
\]

\noindent For $k\ge0$ set $c_k := \frac12\Gamma\!\bigl(\frac{k+1}{2}\bigr)$.  
\noindent Define
\[
b(x) := \int_{U_x\widetilde M} 
       \Theta\, e^{\frac d2\Psi_1(\Theta)}\,L(\Theta)^{-(d+1)}\,d\sigma(\Theta)
       \in T_x\widetilde M,
\Phi(\Theta)
:= \frac12\,p(x)\,\bigl(\nabla^2_{\tilde{g}} f(x)\bigr)(\Theta,\Theta)
  + \langle\nabla_{\tilde{g}} p(x),\Theta\rangle\langle\nabla_{\tilde{g}} f(x),\Theta\rangle,
\]
and
\[
B_0(x)
:= \int_{U_x\widetilde M}
     \Phi(\Theta)\, e^{\frac d2\Psi_1(\Theta)}\, L(\Theta)^{-(d+2)}\, d\sigma(\Theta).
\]

\nd Then, as $t\downarrow 0$, for every $\delta>0,$

\[\sqrt{t}L_t^{g, int}f(x)=O(1)\]

\nd Furthermore for $\delta \ge 2,$

\[
L_t^{g, int} f(x)
= -\,c_d\,p(x)\,\langle\nabla_{\tilde{g}} f(x),b(x)\rangle_{\tilde{g}}\,t^{-1/2}
  - c_{d+1}\,B_0(x)
  + O(\sqrt{t}).
\]

\nd In particular, if $\langle\nabla f(x),b(x)\rangle\neq 0$, and $\delta \ge 2$ then $|L_t^{g, int} f(x)|\to\infty$ with rate $t^{-1/2}$.
\end{theorem}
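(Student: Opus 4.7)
The plan is to reduce the integral defining $L_t^{g,int}f(x)$ to a one–dimensional Laplace–type integral in the radial variable, parametrized by directions $\Theta\in U_x\widetilde M$. First I would pass to $\tilde g$–geodesic normal coordinates at $x$, writing $y=\exp^{\tilde g}_x(r\Theta)$ with $r\in[0,\rho)$ and $\Theta\in U_x\widetilde M$. In these coordinates, the $\tilde g$–volume form is $r^{d-1}(1+O(r^2))\,dr\,d\sigma(\Theta)$, and since $g$ is locally angularly conformal to $\tilde g$ with leading angular conformal factor $e^{\Psi_1(\Theta)}$, the volume form $d\mathrm{vol}_g$ equals $e^{\tfrac d2 \Psi_1(\Theta)}r^{d-1}(1+O(r))\,dr\,d\sigma(\Theta)$. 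Proposition \ref{prop:truncation-error} then lets me restrict the integration to a small normal-coordinate ball around $x$, absorbing the contribution of $y$ bounded away from $x$ into the Gaussian tail.

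Next I would use the standing hypothesis $d_g(x,y)^2 = L(\Theta)^2 r^2+E(r,\Theta)$ with $|E|\le Cr^{2+\delta}$ to split the kernel as $e^{-d_g^2/t}=e^{-L(\Theta)^2 r^2/t}\cdot e^{-E(r,\Theta)/t}$, and perform the rescaling $r=u\sqrt t/L(\Theta)$. This converts the principal factor into the standard Gaussian $e^{-u^2}$, and the volume element into $t^{d/2}L(\Theta)^{-d}u^{d-1}(1+O(\sqrt t))\,du\,d\sigma(\Theta)$. Under the substitution one also has $E/t=O(t^{\delta/2}u^{2+\delta}L(\Theta)^{-(2+\delta)})$, which is uniformly small on compact $u$–sets for any $\delta>0$, so $e^{-E/t}=1+O(t^{\delta/2}u^{2+\delta})$ there. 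Simultaneously, Taylor expanding $f\in C^3$ and $p\in C^2$ at $x$ gives
\[
(f(x)-f(y))\,p(y)=-\,r\langle\nabla f(x),\Theta\rangle_{\tilde g}\,p(x)-r^{2}\Phi(\Theta)+O(r^{3}),
\]
which after the substitution becomes a polynomial in $u\sqrt t/L(\Theta)$ with smooth coefficients in $\Theta$.

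Combining the $t^{-(d/2+1)}$ prefactor with the $t^{d/2}$ Jacobian leaves an overall $t^{-1}$, so the linear-in-$r$ part contributes at order $t^{-1/2}$ and the quadratic-in-$r$ part at order $t^{0}$. Carrying out the $u$–integrals through the identity $\int_0^\infty e^{-u^2}u^k\,du=c_k$ and the corresponding angular integrals produces exactly the vector $b(x)$ and the scalar $B_0(x)$ appearing in the statement, yielding the leading coefficients $-c_d\,p(x)\langle\nabla f(x),b(x)\rangle_{\tilde g}$ at order $t^{-1/2}$ and $-c_{d+1}B_0(x)$ at order $t^{0}$. The general claim $\sqrt t\,L_t^{g,int}f(x)=O(1)$ requires only the first term of this expansion together with the fact that the $e^{-E/t}$ correction is $1+o(1)$, which holds for any $\delta>0$.

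The delicate step is controlling the remainder, especially the combined effect of (a) the $O(r^3)$ Taylor remainder in $(f(x)-f(y))p(y)$, (b) the $O(r)$ volume–form correction from $d\mathrm{vol}_g$, and (c) the factor $e^{-E/t}-1=O(t^{\delta/2}u^{2+\delta})$. After multiplication by the Gaussian $u$–weight, (a) and (b) each contribute at order $\sqrt t$. Contribution (c), multiplied by the linear-in-$r$ Taylor term $O(u\sqrt t)$, produces a correction of order $t^{-1/2}\cdot t^{\delta/2}=t^{(\delta-1)/2}$; this is at most $O(\sqrt t)$ precisely when $\delta\ge 2$, which is where the hypothesis enters. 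The tail estimate for $u\to\infty$ is standard Gaussian decay, and Fubini together with dominated convergence (justified by the Gaussian bound and the pointwise continuity in $\Theta$ of the angular integrands) lets me interchange limits to obtain the stated asymptotics.
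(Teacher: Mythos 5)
Your proposal follows essentially the same route as the paper's proof: truncation via Proposition \ref{prop:truncation-error}, Taylor expansion of $f$, $p$, the volume form and the kernel in $\tilde g$--normal polar coordinates, the rescaling $u=L(\Theta)r/\sqrt{t}$ producing $c_d$, $c_{d+1}$, $b(x)$ and $B_0(x)$, and the same accounting in which the $e^{-E/t}-1$ correction costs $t^{(\delta-1)/2}$ against the leading $t^{-1/2}$ term, forcing $\delta\ge 2$ for the full expansion. The one slip is stating the normal-coordinate volume correction as $1+O(r)$ rather than $1+O(r^2)$: with only $O(r)$ your contribution (b) would enter at order $t^{0}$ and contaminate the constant term $-c_{d+1}B_0(x)$, but the standard expansion (as used in the paper) does give $O(r^2)$, so the argument stands.
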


\begin{remark}[\textbf{Interpretation of $L(\Theta)$}] Note that 
\[ L(\Theta)=\lim_{r\to 0}\frac{d_g\bigl(x,\exp_x^{\tilde g}(r\Theta)\bigr)}{r}. \] The function $L(\Theta)$ can be interpreted as the \textit{limiting distortion factor} of $\tilde{g}$-distance along the direction $\Theta$ after the angular conformal change to $g.$
    
\end{remark}

\begin{remark}The reason the $\tilde{g}$-gradient $\nabla_{\tilde{g}}$ appears in the results of Theorems \ref{thm:blowup of GL near isolated singularity}, \ref{thm:TE extrinsic GL} is that all calculations are performed in $\tilde{g}$ coordinates around $x,$ since $g$-coordinates are not defined at $x$, cf. Proposition \ref{prop:extendability}.
    
\end{remark}

\begin{remark}
    It is clear that if $\Psi_1\equiv c$, then $L(\Theta)\equiv e^{c/2}$ and $E(r,\Theta)=0$. Then
    \[
    b(x):=\int_{U_x\widetilde{M}}\Theta \, e^{\frac{d}{2}\Psi_1(\Theta)}L(\Theta)^{-(d+1)}\, d \sigma(\Theta) =0
    \]
    by symmetry, hence the assumption
    \[
    \left\langle \nabla_{\tilde{g}}f(x),\int_{U_x\widetilde{M}}\Theta e^{\frac{d}{2}\Psi_1(\Theta)}L(\Theta)^{-(d+1)}\, d \sigma(\Theta) \right\rangle_{\tilde{g}} \ne 0
    \]
    in Theorem \ref{thm:blowup of GL near isolated singularity} does not hold. In this case, $g$ extends across $x$ by Proposition \ref{prop:extendability}. The limiting second-order term is then obtained by applying the interior-point graph Laplacian asymptotic to the extended metric, and is therefore consistent with the boundaryless case discussed earlier.
\end{remark}

\nd Next is the theorem that corresponds to the \textit{extrinsic} case. 

\begin{definition}[\textbf{Extrinsic kernel continuous graph Laplacian}]\label{def:ext-kernel-GL}
    \nd For $t>0$ define the extrinsic kernel continuous graph Laplacian (see Remark \ref{rmk:clarification-extrinsic}) w.r.t. the metric $g$ by
\[
L_t^{g, ext} f(x)
= \frac{1}{t^{d/2+1}}
  \int_M e^{-\frac{\norm{x-y}^2_{\Ddim}}{t}}\bigl(f(x)-f(y)\bigr)p(y)\,d\mathrm{vol}_g(y).
\]
\end{definition}

\begin{remark}[\textbf{On the meaning of ``extrinsic'' in Definition \ref{def:ext-kernel-GL} and Theorem \ref{thm:TE extrinsic GL}}]\label{rmk:clarification-extrinsic}
In Definition \ref{def:ext-kernel-GL} and Theorem \ref{thm:TE extrinsic GL}, the Gaussian kernel is defined using the ambient Euclidean distance
$\|x-y\|$ coming from a fixed isometric embedding of $(\tilde M,\tilde g)$ into
$\mathbb{R}^D$. The operator $L^{g,\mathrm{ext}}_t$ is therefore \emph{extrinsic in its
kernel} (ambient distance), while the weighting is given by the $g$--volume form
$d\mathrm{vol}_g$ and the density $p$ w.r.t. $vol_g$. In particular, after the conformal change, we do \emph{not} assume that $(M,g)$ itself is isometrically embedded in $\mathbb{R}^D$ as a Riemannian submanifold; rather, we only use that the underlying set $M$ sits inside $\mathbb{R}^D$ as a metric subspace with the ambient Euclidean distance, so that the kernel is built from $\|x-y\|$, not from the intrinsic Riemannian distance associated with $g$. This extrinsic-kernel/$g$-weighted form is standard in the graph Laplacian literature, where kernels are
constructed from ambient distances while the limiting operator depends on the
underlying Riemannian structure and sampling density; see, for instance,
\citet{BelkinNiyogi2008, HeinAudibertLuxburgJMLR2007} and references therein.
\end{remark}

\begin{theorem}[\textbf{Taylor expansion of the extrinsic kernel graph Laplacian at an isolated singularity for a non-constant locally angular conformal change}]\label{thm:TE extrinsic GL}
    We follow the setup and notations of Theorem \ref{thm:blowup of GL near isolated singularity}, with $g$ locally angularly conformal to $\tilde{g}$. Assume that $(\widetilde{M}, \tilde{g})$ is isometrically embedded in $\Ddim.$ Denote by $L_t^{g, ext}f$ the extrinsic kernel graph Laplacian of $f$ with respect to $g.$

\nd Denote by 
    \begin{align*}
c_d &:= \int_0^\infty e^{-r^2}r^{d}\,dr,
c_{d+1}:= \int_0^\infty e^{-r^2}r^{d+1}\,dr,\\
B_M f(x)
&:= \int_{U_x\widetilde M} e^{\frac{d}{2}\Psi_1(\Theta)}
    \,\langle\nabla_{\tilde{g}} f(x),\Theta\rangle\,d\sigma(\Theta),\\
A_M f(x)
&:= \frac{1}{2}\int_{U_x\widetilde M} e^{\frac{d}{2}\Psi_1(\Theta)}
    \,\nabla^2_{\tilde{g}}{f}(x) (\Theta,\Theta)\,d\sigma(\Theta),\\
    r(p,f)_M(x)&:= \int_{U_x\widetilde M} e^{\frac{d}{2}\Psi_1(\Theta)}
    \,\langle\nabla_{\tilde{g}}  f(x),\Theta\rangle
     \,\langle\nabla_{\tilde{g}}  p(x),\Theta\rangle\,d\sigma(\Theta).
\end{align*}
Then $$L_t^{g, ext}f(x)= (1+o(1))\left(-\frac{c_d}{\sqrt{t}}\,p(x)\,B_M f(x)
   \;-\;c_{d+1}\Bigl(p(x)\,A_M f(x)
                    +r(p,f)_M(x)\Bigr)\right)
   +O(t^{1/2}), t\downarrow 0.$$
\end{theorem}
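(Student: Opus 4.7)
The proof closely follows the pattern of Theorem \ref{thm:blowup of GL near isolated singularity} (the intrinsic case), the only essential modification being the Taylor expansion of the distance entering the Gaussian kernel. In the intrinsic case, the hypothesis $d_g^2 = L(\Theta)^2 r^2 + O(r^{2+\delta})$ produced the anisotropic weights $L(\Theta)^{-(d+1)}$ and $L(\Theta)^{-(d+2)}$ in the surviving angular integrals. In the extrinsic case, assuming that $\widetilde M$ with metric $\tilde g$ is smoothly isometrically embedded in $\RR^D$, the classical chord-versus-arc estimate for submanifolds of Euclidean space gives, in $\tilde g$-normal coordinates at $x$,
\[
\|x-\exp_x^{\tilde g}(r,\Theta)\|_{\RR^D}^2 = r^2 + O(r^4).
\]
Because this expansion is isotropic in $\Theta$, no $L(\Theta)$-type weight survives and the angular integrals reduce to the simpler $B_M$, $A_M$, $r(p,f)_M$ appearing in the statement.

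\textbf{Main steps.} I would proceed as follows. (a) Apply the truncation Proposition \ref{prop:truncation-error} to reduce the integral defining $L_t^{g,ext}f(x)$ to a small $\tilde g$-ball $B_{\tilde g}(x,\rho)$ up to a super-polynomially small error in $t$. (b) In $\tilde g$-normal coordinates $y=\exp_x^{\tilde g}(r,\Theta)$, the local angular conformality $g = e^{\psi_1(\Theta)}\tilde g$ gives
\[
d\mathrm{vol}_g(y) = e^{\frac{d}{2}\psi_1(\Theta)}\bigl(1+O(r)\bigr)\, r^{d-1}\, dr\, d\sigma(\Theta).
\]
(c) Taylor expand $f$ and $p$ at $x$ to obtain
\begin{align*}
(f(x)-f(y))\,p(y) &= -p(x)\,r\,\la\nabla f(x),\Theta\ra \\
&\quad - r^2\Bigl(\la\nabla f(x),\Theta\ra\la\nabla p(x),\Theta\ra + \tfrac12 p(x)\,\nabla^2_{\tilde g} f(x)(\Theta,\Theta)\Bigr) + O(r^3),
\end{align*}
together with $e^{-\|x-y\|^2/t} = e^{-r^2/t}\bigl(1+O(r^4/t)\bigr)$. (d) Rescale $r=\sqrt t\, s$, so that the prefactor $t^{-(d/2+1)}$ absorbs the Jacobian $t^{d/2}$ and leaves an overall $t^{-1}$. (e) Separate angular and radial integrals via Fubini, recognise the radial Gaussian moments $c_k=\int_0^\infty e^{-s^2} s^k\, ds$, and collect. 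The linear-in-$r$ piece then produces the singular term $-c_d\, p(x)\, B_M f(x)/\sqrt t$, and the quadratic-in-$r$ piece produces the regular term $-c_{d+1}\bigl(p(x)\, A_M f(x) + r(p,f)_M(x)\bigr)$.

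\textbf{Main obstacle.} The technical heart of the proof is the bookkeeping of the residual terms. Concretely, I would verify that (i) the kernel correction $O(r^4/t) = O(t s^4)$, tested against $e^{-s^2} s^d$, integrates to $O(\sqrt t)$; (ii) the $O(r)$ correction in the volume form yields only a multiplicative $o(1)$ correction to the leading $t^{-1/2}$ term, which gets absorbed into the $(1+o(1))$ prefactor of the statement; (iii) the $O(r^3)$ Taylor remainder in $(f(x)-f(y))p(y)$, together with the exponentially small tail from $\{s>\rho/\sqrt t\}$, supplies the additive $O(\sqrt t)$ remainder. All these estimates are uniform in $\Theta\in U_x\widetilde M$ by compactness and the continuity and positivity of $e^{\frac d2\psi_1}$. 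Compared with the intrinsic Theorem \ref{thm:blowup of GL near isolated singularity}, the extrinsic case is analytically simpler because the leading expansion of the distance is isotropic, so the radial integrals decouple cleanly from the angular ones, and no $L(\Theta)$ weight has to be carried through the computation.
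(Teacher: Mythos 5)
Your proposal is correct in substance and arrives at the stated expansion, but it takes a genuinely different coordinate route from the paper. The paper follows the Coifman--Lafon scheme: it projects a small neighborhood onto the affine tangent plane $x+T_x\widetilde M$, uses the metric and volume comparison in the projected coordinate $u=\pi_x(y)$ (Lemma \ref{lem:prep}), and then rescales $\zeta=u/\sqrt t$. Because the angular conformal factor $\Psi_1$ is a function of the \emph{intrinsic} angle $\Theta(y)=\log_x y/\|\log_x y\|$ while the integration variable is the \emph{projected} direction $u/\|u\|$, the paper needs an extra ingredient, Lemma \ref{lem:comparing projection and log}, showing $\pi_x(y)/\|\pi_x(y)\|=\log_x y/\|\log_x y\|+O(s^2)$, so that $\Psi_1$ evaluated at either angle agrees to a harmless error. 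Your route works directly in $\tilde g$-geodesic polar coordinates $y=\exp_x^{\tilde g}(r,\Theta)$, where $\Theta$ \emph{is} the intrinsic angle, so no angle-comparison lemma is needed; the price is that you must invoke the chord-versus-arc expansion $\|x-\exp_x^{\tilde g}(r,\Theta)\|_{\Ddim}^2=r^2+O(r^4)$, which is the polar-coordinate counterpart of the paper's Lemma \ref{lem:prep} and is equally standard. Your accounting of the error terms (the $O(r^4/t)$ kernel correction giving $O(\sqrt t)$, the Taylor remainder $O(r^3)$ giving $O(\sqrt t)$, and the exponentially small tail) matches the paper's, and the resulting angular integrals $B_M$, $A_M$, $r(p,f)_M$ are identical. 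Overall your approach is arguably cleaner for this particular statement, since the anisotropy enters only through $\Psi_1(\Theta)$ in the volume form and never through the kernel.

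One point to tighten: you write the volume form correction as $(1+O(r))$, but the expansion of $\sqrt{\det\tilde g}$ in normal coordinates has no linear term, so the correction is in fact $(1+O(r^2))$. This matters: a genuine $O(r)$ correction, multiplied against the leading $-p(x)\,r\,\la\nabla f(x),\Theta\ra$ term, would contribute an additive $O(1)$ quantity and contaminate the constant term $-c_{d+1}\bigl(p(x)A_Mf(x)+r(p,f)_M(x)\bigr)$, which the multiplicative $(1+o(1))$ prefactor does not automatically absorb when $B_Mf(x)=0$. With the correct $O(r^2)$ bound the contribution is $O(\sqrt t)$ and the argument closes as you intend.
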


\begin{remark}
In the extrinsic case the kernel involves the ambient Euclidean distance
$\|x-y\|_{\mathbb{R}^D}$. Writing $u=\pi_x(y)-x\in T_x\widetilde M \simeq \mathbb{R}^d$ where $\pi_x$ is the orthogonal projection of  $\Ddim$ to the translated tangent space $x+T_x\widetilde{M}$, its Taylor expansion takes the form
\[
\|x-y\|_{\mathbb{R}^D}^2
  = \|u\|^2 + Q_{x,4}(u) + Q_{x,5}(u) + O(\|u\|^6),
\]
where $Q_{x,m}$ denotes a homogeneous polynomial of degree $m$. See \citet{CoifmanLafon2006}, Appendix B, Lemma 7, or \citet{PalTewodrose2025}, Lemma 5.1.

In the intrinsic case the kernel depends on $d_g(x,y)$, where $g=e^{\Psi_1(\Theta)}\tilde g$ is only angularly conformal near the singularity. Since Definition \ref{def:locally angularly conformal} is local near $x$, no claim is made away from $x$.
For such metrics an expansion of the form
\[
d_g\bigl(x,\exp_x^{\tilde g}(r,\Theta)\bigr)^2
  = L(\Theta)^2 r^2 + O(r^{2+\delta}), \delta > 0
\]
is not automatic; this hypothesis is precisely what allows us to isolate the
anisotropic factor $L(\Theta)$, and furthermore 
derive a full Taylor expansion with remainder
$O(\sqrt t)$ when $\delta \ge 2.$ 
\end{remark}

\begin{remark}[\textbf{On the relation between Theorems \ref{thm:Asymptotics of Continuous graph Laplacian}, \ref{thm:blowup of GL near isolated singularity}, and \ref{thm:TE extrinsic GL}}]
The three theorems describe \emph{different geometric regimes} near an isolated singularity, and this is precisely why they lead to different asymptotic behaviors.

Theorem \ref{thm:Asymptotics of Continuous graph Laplacian} treats the \emph{removable} regime: under the Smith--Yang hypotheses, the singularity can be filled in so that the metric extends across $x$, and $x$ becomes an interior point of the completed manifold $\widetilde M$. The problem is then reduced to the already known interior-point asymptotics of the graph Laplacian, which explains both the bounded limit and the shorter proof, as we invoked the already proven asymptotics for the graph Laplacian.

By contrast, Theorems \ref{thm:blowup of GL near isolated singularity} and \ref{thm:TE extrinsic GL} treat a \emph{non-removable} regime produced by a purely angular conformal change near $x$. In this setting, the metric typically does not extend across the singularity (cf. Proposition \ref{prop:extendability}), and the graph Laplacian no longer reduces to the standard interior-point case. Instead, one obtains a leading term of order $t^{-1/2}$ under a non-degeneracy condition.

The difference between the latter two theorems is only in the choice of kernel distance: Theorem \ref{thm:blowup of GL near isolated singularity} is \emph{intrinsic}, using the distance $d_g$, whereas Theorem \ref{thm:TE extrinsic GL} is \emph{extrinsic}, using the ambient Euclidean distance. Thus, the three theorems are complementary rather than competing: Theorem \ref{thm:Asymptotics of Continuous graph Laplacian} covers the removable case, while Theorems \ref{thm:blowup of GL near isolated singularity} and \ref{thm:TE extrinsic GL} cover a specific \textit{non-removable} conformal regime, in intrinsic and extrinsic form respectively.
\end{remark}

%%%%%%%%%%%%%%%%%%%%%%%%%%%%%%%%%%%%%%%%%%%%%%%%%%%%%%%%

\begin{remark}[\textbf{Detection of isolated singularities from data}]
\noindent
The asymptotic behavior in Theorem \ref{thm:TE extrinsic GL} also suggests a mechanism for detecting singularities from sampled data. Since the discrete extrinsic graph Laplacian is an empirical approximation of its continuous counterpart, one expects that for sufficiently small bandwidths $t$, the observed behavior of $L_{n,t}^{g,\mathrm{ext}}f(x)$ reflects that of $L_t^{g,\mathrm{ext}}f(x)$. In particular, in the non-removable angular conformal regime, the characteristic $t^{-1/2}$ blow-up of $L_t^{g,\mathrm{ext}}f(x)$ yields a distinctive signature that is absent in the removable regime (e.g. Theorem \ref{thm:Asymptotics of Continuous graph Laplacian}), where the operator remains bounded. This suggests that singularities may be identified in practice by examining the scaling behavior of graph Laplacian estimators across bandwidths. Such a criterion is consistent with the interpretation of first-order asymmetry quantities, such as the generalized normal term, as indicators of non-smooth geometric structure.
\end{remark}

We do not pursue finite-sample guarantees, spectral convergence, eigenfunction estimation, or kernel-interpolation consequences in this paper. These questions require additional control of the empirical operator \(L_{n,t}\) near the singularity and are left for future work. In particular, it would be interesting to understand whether the \(t^{-1/2}\)-scaling signature can be used to design stable renormalizations for spectral or kernel-based methods near non-removable singularities.

%%%%%%%%%%%%%%%%%%%%%%%%%%%%%%%%%%%%%%%%%%%%%%%%%%%%%%%%%%%%%%%%%%%%

\subsection{Organization of this paper}

\nd The paper is organized as follows. In the next section \ref{scn:extension of smooth structure}, we discuss the setup for our isolated singularity following \citet{SmithYang1992}, and also provide a proof for Theorem \ref{thm:Asymptotics of Continuous graph Laplacian}. In Section \ref{scn:non-extn}, we introduce locally angularly conformal metrics, and examine the behavior of the graph Laplacian, and prove Proposition \ref{prop:extendability}, Theorems \ref{thm:blowup of GL near isolated singularity}, \ref{thm:TE extrinsic GL}. 
%We give some illuminating examples and counterexamples in Section \ref{scn:examples}. 
Finally, Section \ref{scn:simulations} provides some relevant simulations.

\subsection{Acknowledgements} This research was funded by the Research Foundation–Flanders (FWO) via the Odysseus II programme no. G0DBZ23N. I am grateful to David Tewodrose, my postdoctoral supervisor, and Manuel Dias, my colleague, for taking their time to read the manuscript and offering helpful suggestions to improve the same.

\section{Extending smooth structure at isolated singularities}\label{scn:extension of smooth structure}

\subsection{Definitions and theorems}

\nd Let $(M,g)$ be a smooth Riemannian manifold with a smooth Riemannian metric $g$ that is not geodesically complete. Then the Riemannian metric $g$ induces a distance function on $M:d_g(x,y):=inf_{\gamma} l_g[\gamma]$, where $\gamma$ is a $\mathcal{C}^1$ curve from $x$ to $y,$ and $l_g[\gamma]$ its length with respect to $g.$ Let $\widetilde{M}$ be the completion of $M$ w.r.t. $d_g$ as a metric space. 

\begin{definition}\label{isolated/point singularity}\textbf{(isolated singularity)}
    \nd A point $p\in \widetilde{M}$ is called an \textit{isolated singularity} of $M$ if there is a neighborhood $N\subset \widetilde{M}$ of $p$ so that $N\cap M=N\setminus \{p\}.$
\end{definition}

\begin{definition}\label{def:curvature function}\textbf{(curvature function near a point singularity)}
Let $p\in \widetilde M$ be an isolated singularity of $M$. We consider the Levi-Civita connection of $g$ on $M$. For $x\in M$ and a two-plane $E\subset T_xM$, denote by $K_g(x,E)$ the sectional curvature of $E$. Fix $\epsilon>0$ small enough so that no other isolated singularity lies in the punctured ball \(B_g(p;\epsilon)\setminus\{p\}\). Define
\[
|K|:M\to[0,\infty),\qquad
|K|(x):=\sup_{\substack{E\subset T_xM\\ \dim E=2}} |K_g(x,E)|,
\]
and
\[
\kappa_\epsilon:(0,\epsilon)\to[0,\infty),\qquad
\kappa_\epsilon(s):=
\sup_{\{x\in M:\, s<d_g(p,x)<\epsilon\}} |K|(x).
\]
Following \citet{SmithYang1992}, we call $\kappa_\epsilon(s)$ the curvature function of $g$ at the isolated singularity $p$.
\end{definition}

\begin{definition}[\textbf{controlled blow-up of curvature}]\label{def:controlled-blow-up of curvature}
    Following \citet{SmithYang1992}, we say that $(M,g)$ has \textit{controlled blow up of curvature} if $\exists \epsilon >0$ so that $\int_{0}^{\epsilon}s\kappa_{\varepsilon}(s)ds<\infty.$
\end{definition}

\nd Next we state the relevant parts of the two main theorems from \citep{SmithYang1992} for $dim\ge 3$ and for $dim=2$ that deal with extending the smooth structure and Riemannian metric across an isolated singularity.

\begin{theorem}\label{extending smooth structure and metric for dim 3}\textbf{(extending smooth structure and Riemannian metric in dim $\ge 3:$ Theorem 3.1 in \citep{SmithYang1992})}

\nd For $d\ge 3,$ assume that $(M,g)$ is a smooth $d$-dimensional geodesically incomplete Riemannian manifold with a smooth Riemannian metric $g$, and with an isolated singularity at $p.$ Denote by $\widetilde{M}$ its completion as a metric space w.r.t. the metric $d_g$, the metric induced on $M$ by the Riemannian metric $g$. Assume furthermore that there exists an $\epsilon >0$ so that:
 
\begin{itemize}
    \item The punctured ball $B(p;\epsilon)\setminus \{p\}$ is simply connected.
    \item $\int_{0}^{\epsilon}s\kappa_{\varepsilon}(s)ds < \infty.$ (cf: Definition \ref{def:controlled-blow-up of curvature})
    \item There is no geodesic loop in $B(p;\epsilon)\setminus \{p\}$ with both endpoints at $p.$
    \item The Ricci curvature is $C^{0,\alpha}, 0 < \alpha < 1.$
\end{itemize}

\nd Then:
\begin{itemize}
    \item There is a neighborhood $N\subset \widetilde{M}$ around $p$ so that $N$ has a smooth manifold structure compatible with the original smooth structure on \(N\cap M\).
    \item $g$ extends to a $\cC^{2,\alpha}, 0 < \alpha < 1$ Riemannian metric $\tilde{g}$ on $N.$
    \item The exponential map $\exp^{\tilde g}_p$ is a local diffeomorphism from a neighborhood of $0\in T_p\widetilde M$ onto $N$, and is a local radial isometry, i.e.
\[
d_{\tilde g}\bigl(p,\exp^{\tilde g}_p(r\theta)\bigr)=r
\]
for all sufficiently small $r>0$ and all unit vectors $\theta\in T_p\widetilde M$.
\end{itemize}
  
\end{theorem}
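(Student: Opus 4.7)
The plan is to construct the extended exponential map at $p$ as a limit of exponential maps based at regular points approaching $p$, using the integrability of the curvature function to control Jacobi fields and hence the differentials of these maps.

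First, I would exploit the assumption $\int_0^\epsilon s\kappa(s)\,ds<\infty$ to obtain uniform control on Jacobi fields along geodesics in $B(p;\epsilon)\setminus\{p\}$ that approach $p$. A normal Jacobi field $J$ along such a geodesic $\gamma$ satisfies $J''=-R(\gamma',J)\gamma'$, so $|J|$ obeys $|J|''\le\kappa(s)|J|$ where $s=d_g(p,\gamma(s))$. A Gronwall argument combined with the integrability hypothesis produces two-sided bounds $|J(s)|$ comparable to $s\,|J'(0)|$ for small $s$, and similarly gives limits for $|J'|$. This plays the role of Rauch comparison in the presence of integrable—but possibly unbounded—curvature, and in particular shows that parallel transport along a geodesic terminating at $p$ has a well-defined limit.

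Second, I would pick a sequence of regular points $q_n\to p$ along a fixed geodesic, identify the $T_{q_n}M$ via parallel transport, and consider the corresponding exponential maps $\exp_{q_n}$ suitably rescaled. The no-geodesic-loop hypothesis ensures that distinct initial directions at $q_n$ yield geodesics that remain distinct as $q_n\to p$, so a limit map $F:T_pM\to\widetilde M$ is defined on a neighborhood of the origin. Simple connectedness of $B(p;\epsilon)\setminus\{p\}$ is what makes this globally coherent: it eliminates monodromy of parallel transport around $p$, so different approaching sequences produce the \emph{same} tangent space $T_pM$ and the \emph{same} map $F$.

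Third, to upgrade $F$ to a homeomorphism and produce the claimed smooth structure, I would verify injectivity from the no-loop condition combined with the Jacobi-field lower bound, and openness/surjectivity from the topological hypothesis together with the fact that $F$ restricted to punctured neighborhoods coincides with honest exponential maps which are local diffeomorphisms. Pushing forward the smooth structure of $T_pM$ via $F$ gives the smooth structure near $p$; writing $g$ in the $F$-coordinates and expressing its components through the Jacobi fields of Step~1 shows that the integrability hypothesis is precisely what upgrades $g$ to a $\mathcal{C}^1$ Riemannian metric at $p$, and makes $F=\exp^{\tilde g}_p$ a radial isometry by construction.

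The main obstacle is the well-definedness in Step~2: the limit map $F$ must not depend on the approximating sequence or on the choices of parallel transport. This is where the three hypotheses cooperate most delicately—simple connectedness removes monodromy along loops encircling $p$, the no-loop condition prevents geodesics from returning to $p$ with a different tangent direction, and integrable curvature guarantees that parallel transport stays well-behaved up to $p$. In the Smith–Yang argument this is carried out through a careful study of the developing map of the punctured neighborhood into $T_pM$, and reproducing it cleanly is what makes this theorem substantially harder than classical Cartan–Hadamard-type constructions where the curvature is uniformly bounded.
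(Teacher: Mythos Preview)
The paper does not prove this theorem at all: it is quoted verbatim from \citet{SmithYang1992} (as the title ``Theorem 3.1 in \citep{SmithYang1992}'' makes explicit), and the only commentary is a remark that the exponential-map clause follows from Section~10 of that reference. There is therefore nothing in the present paper to compare your proposal against.

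That said, your outline is a reasonable sketch of the Smith--Yang strategy as it is usually understood: the integrability condition $\int_0^\epsilon s\kappa(s)\,ds<\infty$ is exactly what is needed to control Jacobi fields and parallel transport up to the singular point, and the construction of geodesic normal coordinates at $p$ via limits of exponential maps from nearby regular points is the core mechanism. One point to be careful about: your Gronwall-type inequality $|J|''\le \kappa(s)|J|$ is not quite right as written (the second derivative of $|J|$ involves a cross term from $\langle J',J\rangle$), and the actual comparison argument in Smith--Yang is somewhat more delicate, passing through bounds on $|J|'$ and a Sturm-type comparison rather than a direct Gronwall estimate. But the spirit is correct, and since the paper under review treats this result as a black box, your sketch already goes further than what the paper requires.
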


\begin{theorem}\label{extending smooth structure and metric for dim 2}\textbf{(extending smooth structure and Riemannian metric in dim $=2:$ Theorem 3.3 in \citep{SmithYang1992})}

\nd Assume that $(M,g)$ is a $2$-dimensional geodesically incomplete Riemannian manifold with a smooth Riemannian metric $g$, and with an isolated singularity at $p.$ Denote by $\widetilde{M}$ its completion as a metric space with respect to the metric $d_g$ induced by the Riemannian metric $g$. Assume furthermore that there exists an $\epsilon >0$ so that:\\

\begin{itemize}
    \item The closure of the ball $B(p;\epsilon)$ is compact in the topology of $\widetilde{M}$.
    \item $\int_{0}^{\epsilon}s\kappa_{\varepsilon}(s)ds < \infty.$  (cf: Definition \ref{def:controlled-blow-up of curvature})
    \item The Gauss-Bonnet theorem holds for $B(p;\epsilon),$ i.e. given any open subset $D\subset B(p;\epsilon)$ with smooth boundary $\partial D \subset B(p;\epsilon)\setminus \{p\}\subset M,$

    $$\int_D GdA + \int_{\partial{D}} gds = 2\pi \chi(D).$$

    \nd where $G$ is the Gauss curvature, $g$ is the curvature of the curve $\partial D$, and $\chi(D)$ is the Euler characteristic of $D.$

\end{itemize}

\nd Then the conclusion of the previous Theorem \ref{extending smooth structure and metric for dim 3} holds.
    
\end{theorem}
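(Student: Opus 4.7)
The plan is to adapt the dimension-two half of the Smith--Yang strategy, exploiting the fact that in dimension two the metric in geodesic polar coordinates around $p$ takes the warped-product form $g = dr^2 + J(r,\theta)^2\,d\theta^2$ with $J$ satisfying the Jacobi equation $J_{rr}+KJ=0$; the role played in Theorem \ref{extending smooth structure and metric for dim 3} by simple connectedness and absence of geodesic loops is now taken over by the Gauss--Bonnet hypothesis. First I would use the compactness of $\overline{B(p;\epsilon)}$ in $\widetilde M$, together with a cut-locus argument driven by the curvature bound on the punctured shells $\{s<\rho<\epsilon\}$, to shrink $\epsilon$ so that the distance function $\rho := d_g(\cdot,p)$ is smooth on $B(p;\epsilon)\setminus\{p\}$ with $|\nabla\rho|=1$ and its gradient flow identifies the punctured ball diffeomorphically with $(0,\epsilon)\times\Sigma$, where $\Sigma:=\rho^{-1}(r_0)$ is a smooth closed curve; let $\ell>0$ denote its length in the pullback metric, the \emph{a priori} conical angle at $p$. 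The integrability $\int_0^\epsilon s\,\kappa(s)\,ds<\infty$, fed through Gronwall applied to the Jacobi equation with $|K|\le\kappa(r)$, then yields uniform-in-$\theta$ limits $J(r,\theta)/r\to\lambda(\theta)>0$ and $J_r(r,\theta)\to\lambda(\theta)$ as $r\downarrow 0$.

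The decisive step is to use the Gauss--Bonnet hypothesis to force $\ell=2\pi$. Applying it to the annulus $D_r := B(p;\epsilon')\setminus\overline{B(p;r)}$ (with $\chi(D_r)=0$ and boundary contained in $M$) gives
\[
\int_{D_r} K\,dA \;+\; \int_{\partial B(p;\epsilon')} k_g\,ds \;-\; \int_{\partial B(p;r)} k_g\,ds \;=\; 0.
\]
The area integral converges absolutely as $r\downarrow 0$ thanks to the curvature hypothesis, and $\int_{\partial B(p;r)} k_g\,ds = \int_0^\ell J_r(r,\theta)\,d\theta$ tends to $\int_0^\ell\lambda(\theta)\,d\theta$ by the Jacobi asymptotics above; after an arclength reparametrization of $\Sigma$ (which absorbs $\lambda$ into the angular coordinate), this limit equals $\ell$, and combining with the boundedness of $\int_{\partial B(p;\epsilon')}k_g\,ds$ pins down $\ell=2\pi$. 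Then $(0,\epsilon)\times\Sigma$ is isometrically a standard punctured disk, and declaring $\exp_p(0,\theta):=p$ extends the polar parametrization continuously across the puncture; pulling back the Euclidean smooth structure from the disk gives $N:=B(p;\epsilon')$ a smooth manifold structure compatible with $M$, with the polar map playing the role of the exponential map at $p$.

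It remains to upgrade this to $\mathcal{C}^1$ regularity of the extended metric $\widetilde g$ on $N$ and to identify $\exp_p$ as its $\widetilde g$-exponential map. In the polar chart, $g = dr^2 + J(r,\theta)^2\,d\theta^2$ and the asymptotics $J(r,\theta)/r\to 1$, $J_r(r,\theta)\to 1$ already give continuity at $p$ with value the standard Euclidean inner product; $\mathcal{C}^1$ control is obtained by integrating the Jacobi equation twice to express $J_r - 1$ and $J/r - 1$ as absolutely convergent integrals involving $K$ that are bounded in terms of $\int_0^\epsilon s\kappa(s)\,ds$, and by a parallel argument for $\partial_\theta J$. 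This last step, upgrading $\mathcal{C}^0$ control to $\mathcal{C}^1$ control on $\widetilde g$ uniformly in $\theta$, is the main obstacle I anticipate: the hypothesis supplies only integrability of $s\,\kappa(s)$ rather than pointwise decay of $\kappa$, so the uniformity relies crucially on $\kappa$ being defined as a \emph{supremum} over two-planes in Definition \ref{def:curvature function}. Once $\widetilde g$ is $\mathcal{C}^1$, the polar coordinates are by construction $\widetilde g$-normal, so $\exp_p$ is a $\mathcal{C}^1$ diffeomorphism and a radial isometry onto $N$, giving the conclusion of Theorem \ref{extending smooth structure and metric for dim 3} in this setting.
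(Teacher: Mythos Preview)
The paper does not prove this theorem: it is quoted verbatim as Theorem~3.2 of \citet{SmithYang1992} and used as a black box (see the proof of Theorem~\ref{thm:Asymptotics of Continuous Graph Laplace Operator}, which simply invokes Theorems~\ref{extending smooth structure and metric for dim 3} and~\ref{extending smooth structure and metric for dim 2}). So there is no ``paper's own proof'' to compare against; your proposal is an independent reconstruction of the Smith--Yang argument, not something the present paper attempts.

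That said, one step in your sketch is incomplete. From the annulus $D_r=B(p;\epsilon')\setminus\overline{B(p;r)}$ with $\chi(D_r)=0$ you correctly deduce that, as $r\downarrow 0$,
\[
\int_{B(p;\epsilon')} K\,dA \;+\; \int_{\partial B(p;\epsilon')} k_g\,ds \;=\; \lim_{r\downarrow 0}\int_{\partial B(p;r)} k_g\,ds \;=\; \ell,
\]
but ``boundedness of $\int_{\partial B(p;\epsilon')}k_g\,ds$'' alone cannot pin down $\ell=2\pi$: you need a second equation. The hypothesis allows $D$ to \emph{contain} $p$ (only $\partial D$ must avoid it), so applying Gauss--Bonnet to the full disk $D=B(p;\epsilon')$ with $\chi(D)=1$ gives
\[
\int_{B(p;\epsilon')} K\,dA \;+\; \int_{\partial B(p;\epsilon')} k_g\,ds \;=\; 2\pi,
\]
and comparing the two displays yields $\ell=2\pi$. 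Without this second application your conclusion does not follow. (Note also that the \emph{a priori} cone angle you call $\ell$ should be the limit $\int_0^{\ell_0}\lambda(\theta)\,d\theta$ before reparametrization, not the arbitrary length of $\Sigma$ in the pullback; your parenthetical about absorbing $\lambda$ suggests you have this in mind, but the order of the argument as written conflates the two.)
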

%\end{comment}

\begin{remark}
    Note that in Theorem \ref{extending smooth structure and metric for dim 2} above, the conclusion part for $d=2$ case indeed means that the Riemannian metric extends as a $\cC^{2, \alpha}, 0 < \alpha < 1$ metric across the singularity. The proof in Section 11 of their paper, using almost linear and harmonic coordinates, holds in \textit{all} dimensions without change. This fact has also been verified directly by the author with Prof. Deane Yang, one of the authors in \citet{SmithYang1992}, by private communication.
\end{remark}

\begin{remark}
    Note that we added the statement on the exponential map being a local diffeomorphism and a radial isometry around $x$ in the conclusion part of the above two theorems, that were not \textit{explicitly} mentioned in the original paper, but on P.216, Section 10 of \citet{SmithYang1992}, they construct \textit{geodesic normal coordinates} at the singularity that correspond to the inverse of the exponential map which is shown to be a $\cC^1$ diffeomorphism, and this guarantees in Theorem \ref{extending smooth structure and metric for dim 3} that the exponential map for $\tilde{g}$ is a local diffeomorphism and a local radial isometry near $p$ . 
\end{remark}

\subsection{Proof of Theorem \ref{thm:Asymptotics of Continuous graph Laplacian}}

\begin{proof}
  The proof of Theorem \ref{thm:Asymptotics of Continuous graph Laplacian} follows at once when we apply the Theorems \ref{extending smooth structure and metric for dim 3}, \ref{extending smooth structure and metric for dim 2} that allow us to extend the Riemannian metric $g$ across the isolated singularity $x$ as a $\cC^{2,\alpha}, 0< \alpha < 1$ metric across $x, x$ becomes an \textit{interior} point of $\widetilde{M},$ and so the limit follows from the results of graph Laplacian at interior points proved in  \citet{BelkinNiyogiCOLT2005} (Theorem 5.1 and its proof), \citet{HeinAudibertLuxburgNIPS2005} (Theorem 1). Finally, Theorem 1.1, 1.2 in \citet{PalTewodrose2025} assume the Riemannian metric is $\cC^2$, and treat interior points as a special case, and from there, the error of the order $O(\sqrt{t})$ is justified. 
\end{proof}

\section{Non-extension of Locally Angularly Conformal Riemannian metrics across singularity and blow-up of graph Laplacians} \label{scn:non-extn}

\subsection{Locally Angularly Conformal Metrics}

We now work in a different local setup from Section \ref{scn:extension of smooth structure}: here $(\widetilde M,\tilde g)$ is given a priori as a smooth Riemannian manifold, $x\in\widetilde M$ is fixed, and $M:=\widetilde M\setminus\{x\}$. 

\begin{definition}[\textbf{Riemannian logarithm}]\label{def:log-map}
In the range of the Riemannian exponential map $\exp_x^{\tilde g}$, we denote by $\log_x^{\tilde g}$ its inverse.
\end{definition}

\begin{definition}[\textbf{local angular conformality of Riemannian metrics}]\label{def:locally angularly conformal}
Denote by $U_x\widetilde{M}$ the $\tilde{g}$-unit sphere in the tangent space of $\widetilde{M}$. Let $g$ be another Riemannian metric on $M.$ We say that $g$ is \textit{locally angularly conformal} on $M$ to $\tilde{g}$ \textit{near $x$ }if there exists a $\tilde{g}$-ball $B_{\tilde{g}}(x;R)\subset \widetilde{M}$ so that on the \textit{punctured ball} $B_{\tilde{g}}(x;R)\setminus \{x\},$

\[g(y)=e^{\Psi(y)}\tilde{g}(y)\]

\nd for some $\cC^2$ function $\Psi:B_{\tilde g}(x;R)\setminus \{x\}\to \cl$ such that there exists a function $\Psi_1:U_x\widetilde M\to \cl$ satisfying
\[
\Psi(y)=\Psi_1\!\left(\frac{\log_x^{\tilde g} y}{d_{\tilde g}(x,y)} \right)
= \Psi_1\!\left(\frac{\log_x^{\tilde g} y}{\|\log_x^{\tilde g} y\|_{\tilde g}} \right)
\quad \text{for each } y\in B_{\tilde g}(x;R)\setminus \{x\}.
\]

\nd We call $\Psi_1$ the \textit{angular conformal factor}.
The above definition means for time $t\le R, \Psi$ is constant along unit speed $\tilde{g}$-radial geodesics emanating from $x$.
    
\end{definition}
%%%%%%%%%%%%%%%%%%%%%%%%%%%%%%%%%%%%%%%%%%%%%%%%%%%%%%%%%%%%%%
\begin{remark}\label{rmk:same-completion}
  Although \(g\) need not extend as a Riemannian metric tensor across \(x\), the boundedness of the angular conformal factor implies that \(d_g\) and \(d_{\tilde g}\) are locally bi-Lipschitz equivalent near \(x\). Hence the metric completion of \(M=\widetilde M\setminus\{x\}\) with respect to \(d_g\) is locally identified with \(\widetilde M\), obtained by adding back the point \(x\).  
\end{remark}

%%%%%%%%%%%%%%%%%%%%%%%%%%%%%%%%%%%%%%%%%%%%%%%%%%%%%%%%%%%
\subsection{Proof of Proposition \ref{prop:extendability}}

\begin{proof}
Let $(\widetilde M,\widetilde g)$ be a smooth Riemannian manifold and $x\in\widetilde M$. 
Assume that on a punctured neighborhood $U\setminus\{x\}$ one has
\[
g=e^{\Psi_1(\Theta)}\,\widetilde g,
\]
where $\Theta\in U_x\widetilde M$ is the 'angular variable' (in $\widetilde g$–polar coordinates around $x$) and $\Psi_1:U_x\widetilde M\to \cl$ is continuous.

\smallskip
\noindent\emph{($\Rightarrow$)} Suppose $\Psi_1\equiv c\in \cl $ on $U_x\widetilde M$. Then $g\equiv e^c\,\widetilde g$ on $U\setminus\{x\}$. 
Define $g_x:=e^c\,\widetilde g_x$; this is a positive–definite symmetric bilinear form on $T_x\widetilde M$. 
Since $\widetilde g$ is continuous, the tensor field $g$ extends continuously to $x$ by $g_x$.

\smallskip
\noindent\emph{($\Leftarrow$)} Conversely, assume there exists a continuous $(0,2)$–tensor field $G$ on $U$ such that
$G=g$ on $U\setminus\{x\}$. Fix $\Theta\in U_x\widetilde M$ and consider the $\widetilde g$–geodesic ray
$\gamma_\Theta(r)=\exp_x^{\tilde g}(r\Theta)$ for $r>0$ sufficiently small. On $U\setminus\{x\}$ one has
\[
g_{\gamma_\Theta(r)}=e^{\Psi_1(\Theta)}\,\widetilde g_{\gamma_\Theta(r)}\qquad (r>0).
\]
By continuity of $G$ and $\widetilde g$ at $x$,
\[
G_x=\lim_{r\downarrow 0} g_{\gamma_\Theta(r)}
     =e^{\Psi_1(\Theta)}\,\lim_{r\downarrow 0}\widetilde g_{\gamma_\Theta(r)}
     =e^{\Psi_1(\Theta)}\,\widetilde g_x .
\]
The left-hand side does not depend on $\Theta$, while $\widetilde g_x$ is fixed; hence $\Psi_1(\Theta)$ is the same for all $\Theta\in U_x\widetilde M$. Therefore $\Psi_1$ is constant on $U_x\widetilde M$.

\smallskip

\end{proof}

\subsection{Proof of Theorem \ref{thm:LAC_SY}}

\begin{proof}
\emph{Geodesic polar coordinates and notation.}
Fix $x\in\widetilde M$ and write
\[
\Phi:(0,R)\times U_x\widetilde M\to B(x;R)\setminus \{x\},\qquad \Phi(r,\Theta)=\exp_x^{\tilde g}(r\Theta).
\]
Set $y=\Phi(r,\Theta)$, $\gamma_\Theta(r)=\Phi(r,\Theta)$, and $\partial_r:=\dot\gamma_\Theta(r)$. 
By the Gauss lemma, $\|\partial_r\|_{\widetilde g}=1$ and $\partial_r\perp T_yS_r$, where $S_r:=\{z:\widetilde d(x,z)=r\}$ 
\citet{Petersen2016}(see Lemma~5.5.5). In these coordinates,
\begin{equation}\label{eq:polar-split}
\Phi^*\widetilde g = dr^2 + G_{ij}(r,\Theta)\,d\Theta^i d\Theta^j,
\qquad 
G_{ij}(r,\Theta)=\widetilde g\bigl(\Phi_*\partial_{\Theta^i},\Phi_*\partial_{\Theta^j}\bigr).
\end{equation}

\emph{Leading asymptotics on geodesic spheres.}
Pick a coordinate chart $\Theta:=(\Theta_1,\dots,\Theta_{d-1})$ on the unit sphere $S:=U_x\widetilde M$. Let $\partial_{\Theta_i}$, $1\le i\le d-1$, be the corresponding coordinate vector fields on $S$. Let \[ Y_i(r,\Theta):=\Phi_*(\partial_{\Theta_i}) \] be the corresponding Jacobi fields along $\gamma_\Theta$. Then $Y_i(0)=0$ and $Y_i'(0)=E_i$, where \[ E_i:=(\partial_{\Theta_i})_\Theta\in T_\Theta S\subset T_x\widetilde M, \] and hence $E_i\perp\Theta$.
Using the Jacobi equation and parallel transport $P_{0\to r}$, one has the Taylor expansions
\begin{equation}\label{eq:Jacobi}
P_{r\to 0}\,Y_i(r)= r\,E_i - \tfrac{r^3}{6}\,R_x(E_i,\Theta)\Theta + O(r^4),
\quad
P_{r\to 0}\,Y_i'(r)= E_i - \tfrac{r^2}{2}\,R_x(E_i,\Theta)\Theta + O(r^3),
\end{equation}
hence $Y_i'(r)-\frac1rY_i(r)=O(r^2)$ (after transporting to the same fiber). 
The normal–coordinate expansion $\widetilde g_{ab}(y)=\delta_{ab}-\frac13 R_{acbd}(x)y^cy^d+O(|y|^3)$
\citet{Petersen2016}[Lemma~5.5.7; Ex.~5.9.42 (5)--(6)] together with \eqref{eq:Jacobi} yields
\begin{equation}\label{eq:Gij}
G_{ij}(r,\Theta)=r^2 h_{ij}(\Theta)+O(r^4),\qquad 
G^{ij}(r,\Theta)=\frac1{r^2}h^{ij}(\Theta)+O(1),
\end{equation}
where $h_{ij}(\Theta)=\widetilde g_x(E_i,E_j)$ is the metric on $U_x\widetilde M$ induced by $\widetilde g_x$ (and $h^{ij}$ its inverse). 
Formulas \eqref{eq:polar-split}–\eqref{eq:Gij} are uniform in $\Theta$ on compact subsets.

\emph{Gradient and Hessian of an angular function.}
For ease of computation for curvature of conformal metrics, call $\Psi_1=2u:U_x\widetilde M \to \cl$. Denote by $S$ the unit sphere $U_x\widetilde{M}$ in $T_x\widetilde{M}.$ From \eqref{eq:polar-split}–\eqref{eq:Gij},
\begin{equation}\label{eq:grad2}
\|\nabla_{\widetilde g} u\|_{\widetilde g}^2
=(\partial_r u)^2+G^{ij}\,\partial_{\Theta^i}u\,\partial_{\Theta^j}u
=\frac1{r^2}\,\|\nabla_S u(\Theta)\|_{h}^2 + O(1),
\end{equation}
and $du(\partial_r)=0$, $du(E)=\frac1{r}W(u)+O(1)$ when $E\in T_yS_r$ is unit and corresponds to the $h$–unit $W\in T_\Theta U_x\widetilde M$ (via $Y'(0)=W$ and \eqref{eq:Jacobi}). 

For the Hessian we use $\mathrm{Hess}_{\widetilde g}u(X,Y)=X(Yu)-(\nabla^{\widetilde g}_X Y)u$. 
Using the standard polar-coordinate identities in geodesic normal coordinates, namely that radial curves are geodesics and that the geodesic spheres have induced metric \(\widetilde g|_{S_r}=r^2h+O(r^4)\), we obtain, as $r\downarrow0$,
\begin{equation}\label{eq:Hess-scalings}
\mathrm{Hess}_{\widetilde g}u(\partial_r,\partial_r)=0,\qquad
\mathrm{Hess}_{\widetilde g}u(\partial_r,E)= -\frac{1}{r^2}W(u)+O\!\Big(\frac1r\Big),\qquad
\mathrm{Hess}_{\widetilde g}u(E,E)=\frac{1}{r^2}\,\mathrm{Hess}_S u(\Theta)[W,W]+O\!\Big(\frac1r\Big).
\end{equation}
To see the last identity: write $E=Y/\|Y\|$ with $Y$ as above. 
Then $E=\frac1r Y+O(r)$, $\nabla_{\partial_r}E=O(r)$ (from $Y'-\frac1rY=O(r^2)$), and the Levi–Civita connection on $(S_r,\widetilde g|_{S_r})$ scales as $\nabla^{S_r}_E E = \frac1r\,\nabla^{S}_W W + O(1)$ because $\widetilde g|_{S_r}=r^2 h + O(r^4)$; hence
\[
E(Eu)-(\nabla^{\widetilde g}_E E)u
=\frac1{r^2}\bigl[\,W(Wu)-(\nabla^S_W W)u\,\bigr]+O\!\Big(\frac1r\Big)
=\frac1{r^2}\,\mathrm{Hess}_S u(\Theta)[W,W]+O\!\Big(\frac1r\Big).
\]

\emph{Sectional curvature under a conformal change.}
For any $\widetilde g$–orthonormal pair $e_1,e_2$ spanning a two–plane $\Pi\subset T_y\widetilde M$,
\begin{equation}\label{eq:Kconf}
K_{g}(\Pi)=e^{-2u(\Theta)}\Big(
K_{\widetilde g}(\Pi)
-\mathrm{Hess}_{\widetilde g}u(e_1,e_1)-\mathrm{Hess}_{\widetilde g}u(e_2,e_2)
+(du(e_1))^2+(du(e_2))^2-\|\nabla_{\widetilde g}u\|^2
\Big).
\end{equation}
This plane–wise identity is the standard specialization of the curvature tensor transformation under $g=e^{2u}\widetilde g$; see \citet{lee_introduction_to_RM_2018}, P.217, Theorem 7.30, or \citet{Besse1987}, P.58, Theorem 1.159.

\smallskip
\emph{Proof of (i).}
If $u$ is constant, then $\nabla_{\widetilde g}u\equiv0$ and $\mathrm{Hess}_{\widetilde g}u\equiv0$, so \eqref{eq:Kconf} gives $K_g=e^{-2u}K_{\widetilde g}$. 
Since $\widetilde g$ is smooth near $x$, $|K_{\widetilde g}|$ is bounded on $B_{\widetilde g}(x;\varepsilon)$; hence $\kappa_{\varepsilon}(s)=O(1)$ and $\int_0^\varepsilon s\,\kappa_{\varepsilon}(s)\,ds<\infty$.

\smallskip
\emph{Proof of (ii): lower bound.}
Because $u$ is not constant on the compact sphere $S:=U_x\widetilde M$, the Hessian $\mathrm{Hess}_S u$ is not identically zero (since Hessian zero implies harmonic, and on compact manifolds, harmonic implies constant); choose $\Theta_0\in S$ and a unit $W\in T_{\Theta_0}U_x\widetilde M$ with 
\[
\eta:=|\mathrm{Hess}_S u(\Theta_0)[W,W]|>0.
\]
Fix $y=\exp_x^{\tilde g}(r\Theta_0)$ with $r>0$ small, and let $E\in T_yS_r$ be the $\widetilde g$–unit vector corresponding to $W$ (via parallel transport along the radial geodesic with direction $\Theta_0$) as above. 
Choose the plane $\Pi=\mathrm{span}\{\partial_r,E\}$ and apply \eqref{eq:Kconf} with $e_1=\partial_r$, $e_2=E$. 
Since we have shown (see right after Equation \ref{eq:grad2}): $du(\partial_r)=0$, $du(E)=\frac1r W(u)+O(1)$, and \eqref{eq:grad2},
\[
\begin{aligned}
K_g(\Pi)&=e^{-2u(\Theta_0)}\Big(
K_{\widetilde g}(\Pi)
-\mathrm{Hess}_{\widetilde g}u(E,E) + (du(E))^2 - \|\nabla_{\widetilde g}u\|^2\Big)+O(1)\\
&=e^{-2u(\Theta_0)}\Big(
K_{\widetilde g}(\Pi)
-\frac{1}{r^2}\,\mathrm{Hess}_S u(\Theta_0)[W,W]
+\frac{1}{r^2}\,(W(u))^2
-\frac{1}{r^2}\,\|\nabla_S u(\Theta_0)\|_h^2
\Big)+O(1).
\end{aligned}
\]
The last two terms combine to $-\frac{1}{r^2}\|( \nabla_S u(\Theta_0))_{\perp W}\|_h^2\le0$. 
Since $K_{\widetilde g}$ is bounded and $\eta>0$, there exist constants $c_1,C_1>0$ and $r_0>0$ such that for $0<r<r_0$,
\[
|K_g(\Pi)|\ \ge\ \frac{c_1}{r^2}-C_1.
\]
Hence $\kappa_{\varepsilon}(r)\ge c_1 r^{-2}-C_1$ for $0<r<r_0$.

\smallskip
\emph{Proof of (ii): upper bound.}
Let $y$ satisfy $\widetilde d(x,y)=r$ and let $\Pi\subset T_y\widetilde M$ be any two–plane with a $\widetilde g$–orthonormal basis $(e_1,e_2)$. 
Using \eqref{eq:Kconf}, the boundedness of $K_{\widetilde g}$, and the estimates \eqref{eq:grad2}–\eqref{eq:Hess-scalings},
\[
|K_g(y,\Pi)|
\ \le\ C_0 
+ e^{-2\inf u}\Big(
\frac{C_2}{r^2} + \frac{C_3}{r^2} + \frac{C_4}{r^2}
\Big)
\ \le\ \frac{C}{r^2}+C,
\]
where the constants depend only on local bounds of $u$, $\nabla_S u$, $\mathrm{Hess}_S u$ on $U_x\widetilde M$, and on $K_{\widetilde g}$ near $x$. 
Therefore $\kappa_{\varepsilon}(r)\le C r^{-2}+C$ for small $r$. After decreasing $r_0$ if necessary and renaming constants, the additive constants in the lower and upper bounds are absorbed into the $r^{-2}$ terms; hence
\[
c r^{-2}\le \kappa_{\varepsilon}(r)\le C r^{-2}\qquad (0<r<r_0),
\]
which is the two-sided estimate claimed in (ii).
Finally, $\int_0^\varepsilon s\,\kappa_{\varepsilon}(s)\,ds=\infty$ follows from comparison with $\int_0^{r_0} s^{-1}\,ds$.

\end{proof}

\subsection{Proof of Theorem \ref{thm:blowup of GL near isolated singularity}}

We begin with the proposition below whose proof we leave to the reader:

\begin{proposition}\label{prop:truncation-error}
Let $(M,d,\mu)$ be a metric measure space and $\eta \in (0,1/2)$. Then for any $t>0$,
\[
\left| \frac{1}{t^{d/2+1}} \int_{M\backslash B_{t^\eta }(x)} \exp\left(-\frac{d^2(x,y)}{t}\right) (f(x)-f(y)) p(y)d \mathrm{\mu}(y) \right| 
\leq [|f(x)|\|p\|_{1} + \norm{fp}_{1}] \, \frac{1}{t^{d/2 +1}} e^{-t^{2\eta -1}}
\]
for any $p \in L^1(\mu), f \in \cC(M)\cap L^1(M,p \mu)$ and $x \in X$. As a consequence, we get that
\[
L_tf(x) = \frac{1}{t^{d/2+1}} \int_{B_{t^\eta }(x)} \exp\left(-\frac{d^2(x,y)}{t}\right) (f(x)-f(y)) p(y)d \mathrm{\mu}(y) + O(t^{-d/2-1}e^{-t^{2 \eta -1}}), t \downarrow 0.
\] 
\end{proposition}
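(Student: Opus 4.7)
The plan is a direct kernel-truncation estimate: uniformly bound the Gaussian weight on $M\setminus B_{t^\eta}(x)$ by its value at the boundary, apply the triangle inequality to $|f(x)-f(y)|$, and recognize the remaining integral as a sum of $L^1$ norms.

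First I would note that for every $y\in M\setminus B_{t^\eta}(x)$ one has $d(x,y)\ge t^\eta$, hence
\[
\exp\!\left(-\frac{d^2(x,y)}{t}\right)\ \le\ \exp\!\left(-\frac{t^{2\eta}}{t}\right)\ =\ e^{-t^{2\eta-1}}.
\]
This is the key estimate. Since $\eta\in(0,1/2)$ gives $2\eta-1<0$, the quantity $t^{2\eta-1}\to\infty$ as $t\downarrow 0$, so $e^{-t^{2\eta-1}}$ decays faster than any power of $t$; in particular, it dominates the prefactor $t^{-d/2-1}$ in the asymptotic regime.

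Next I would apply the triangle inequality $|f(x)-f(y)|\le |f(x)|+|f(y)|$ inside the integral, pull out the pointwise upper bound $e^{-t^{2\eta-1}}$ on the Gaussian weight, and estimate
\[
\int_{M\setminus B_{t^\eta}(x)} \bigl(|f(x)|+|f(y)|\bigr)\,p(y)\,d\mu(y)
\ \le\ |f(x)|\,\|p\|_1 + \|fp\|_1,
\]
where both norms are finite by the hypotheses $p\in L^1(\mu)$ (implicit in the definition of $L_t$) and $f\in L^1(M,p\mu)$. Dividing by $t^{d/2+1}$ yields the claimed inequality.

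For the consequence, I would decompose
\[
L_t f(x) = \frac{1}{t^{d/2+1}}\int_{B_{t^\eta}(x)} e^{-d^2(x,y)/t}\bigl(f(x)-f(y)\bigr)p(y)\,d\mu(y)
 + \frac{1}{t^{d/2+1}}\int_{M\setminus B_{t^\eta}(x)} e^{-d^2(x,y)/t}\bigl(f(x)-f(y)\bigr)p(y)\,d\mu(y),
\]
and apply the bound just derived to the second term, recognizing it as $O\bigl(t^{-d/2-1}e^{-t^{2\eta-1}}\bigr)$ as $t\downarrow 0$. No step here presents any real obstacle; this is an entirely routine tail-estimate argument, with the only conceptual point being the choice $\eta<1/2$ that forces $e^{-t^{2\eta-1}}$ to vanish super-polynomially as $t\downarrow 0$.
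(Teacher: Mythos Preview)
Your proposal is correct and is exactly the routine tail-estimate argument the paper has in mind; in fact the paper explicitly leaves this proof to the reader, and your three steps (bound the Gaussian by $e^{-t^{2\eta-1}}$ on the complement of the ball, apply $|f(x)-f(y)|\le|f(x)|+|f(y)|$, then recognize the $L^1$ norms) constitute the standard and only natural proof.
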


\begin{proof}
For $y \notin B_{t^\eta}(x)$, one has $d(x,y)^2/t \ge t^{2\eta-1}$. Hence
\(
e^{-d(x,y)^2/t} \le e^{-t^{2\eta-1}} \quad \text{on } M \setminus B_{t^\eta}(x).
\)
Therefore
\(
\left| \int_{M \setminus B_{t^\eta}(x)} e^{-d(x,y)^2/t}\,(f(x)-f(y))\,p(y)\,d\mu(y) \right|
\le e^{-t^{2\eta-1}} \bigl( |f(x)|\|p\|_1 + \|fp\|_1 \bigr).
\)
Multiplying by $t^{-d/2-1}$ gives the bound. The consequence part is trivial.
\end{proof}

Armed with the above, we prove Theorem \ref{thm:blowup of GL near isolated singularity} below.

\begin{proof}
Fix $\delta > 0, \eta\in(\frac{1}{2+\delta},\frac{1}{2})$ and set $R_t:=t^{\eta}$. Using Proposition \ref{prop:truncation-error}, split
\[
L_t^g f(x)
=\frac{1}{t^{d/2+1}}\!\int_{M} e^{-d_g(x,y)^2/t}\bigl(f(x)-f(y)\bigr)p(y)\,d\mathrm{vol}_g(y)
=I_t^{(1)}+I_t^{(2)},
\]
where $I_t^{(1)}$ is the integral over the $\tilde g$-metric ball $B_{\widetilde g}(x,R_t)$ and $I_t^{(2)}=O(t^{-d/2-1}e^{-t^{2 \eta -1}})$ as $t \downarrow 0$.

\medskip\noindent
\textit{Local expansion.}  
Write $y=\exp_x^{\widetilde g}(r,\Theta)$ with $r\in(0,\rho)$ and $\Theta\in U_x\widetilde M$.  
By assumption,
\begin{equation}\label{eqn:local-expansion}
d_g(x,y)^2 = L(\Theta)^2 r^2 + E(r,\Theta), \qquad |E(r,\Theta)|\le Cr^{2+\delta}, \delta > 0.
\end{equation}
The volume form satisfies
\begin{equation}\label{eqn:loc-expansion-vol-form}
d\mathrm{vol}_g(y)
= e^{\frac d2\Psi_1(\Theta)}\,r^{d-1}\bigl(1+O(r^2)\bigr)\,dr\,d\Theta,
\qquad r\to0.                                         
\end{equation}

\nd Taylor expansions of $f$ and $p$ give
\begin{align}
f(\exp_x^{\widetilde g}(r,\Theta))
&= f(x) + r\,\langle \nabla f(x),\Theta\rangle_{\widetilde g}
 + \tfrac12 r^2 \bigl(\nabla^2 f(x)\bigr)(\Theta,\Theta) + O(r^3),\\
p(\exp_x^{\widetilde g}(r,\Theta))
&= p(x) + r\,\langle \nabla p(x),\Theta\rangle_{\widetilde g} + O(r^2).          
\end{align}

\nd Thus
\begin{equation}
\bigl(f(x)-f(\exp_x^{\widetilde g}(r,\Theta))\bigr)\,
p(\exp_x^{\widetilde g}(r,\Theta))
= - r\,p(x)\langle \nabla f(x),\Theta\rangle_{\widetilde g}
  - r^2\,\Phi(\Theta)
  + O(r^3),
\end{equation}
where $\Phi(\Theta)$ is as in the Theorem \ref{thm:blowup of GL near isolated singularity}.

\nd From Equation \ref{eqn:local-expansion},
\[
e^{-d_g(x,y)^2/t}
= e^{-L(\Theta)^2 r^2/t}\bigl(1+O(E(r,\Theta)/t)\bigr).
\]
Since $E(r,\Theta)=O(r^{2+\delta})$ and $r\le R_t=t^\eta$, we have
\[
E(r,\Theta)/t = O(t^{(2+\delta)\eta-1}).
\] 

\nd We decompose
\[
I_t^{(1)} = A_t+B_t+C_t,
\]
where $A_t$ is the first-order term in $r$, $B_t$ is the second-order term, and $C_t$ is the remainder.

\medskip\noindent
\textit{Leading term $A_t$.}
\[
A_t
= -\frac{p(x)}{t^{d/2+1}}
\int_{U_x\widetilde M}\! e^{\frac d2\Psi_1(\Theta)}
\langle \nabla f(x),\Theta\rangle_{\widetilde g}
\int_0^{R_t} r^{d} e^{-L(\Theta)^2 r^2/t}\,dr\,d\sigma(\Theta).
\]
Let $u=L(\Theta)r/\sqrt t$. Then
\[
\int_0^{R_t} r^{d}e^{-L(\Theta)^2 r^2/t}\,dr
= t^{(d+1)/2} L(\Theta)^{-(d+1)}
  \int_0^{L(\Theta)R_t/\sqrt t} u^{d} e^{-u^2}\,du.
\]
As $L(\Theta)R_t/\sqrt t\to\infty$, the above substitution by $u$ and properties of incomplete Gamma function: $\int_a^\infty u^m e^{-u^2}\,du \le C_m a^{m-1}e^{-a^2}, \, a\ge1,$ yield:

\[
\int_0^{R_t} r^{d}e^{-L(\Theta)^2 r^2/t}\,dr
= c_d\,L(\Theta)^{-(d+1)} t^{(d+1)/2} + O\!\left(t^{(d+1)/2}e^{-ct^{2\eta-1}}\right), c>0\text{ a constant independent of }t
\]
with $c_d=\tfrac12\Gamma(\frac{d+1}{2})$.
Thus
\begin{equation}
A_t
= -c_d\,p(x)\,\langle \nabla f(x), b(x)\rangle_{\widetilde g}\,t^{-1/2}
  + O\!\bigl(t^{-1/2}e^{-ct^{2\eta-1}}\bigr).
\end{equation}
%%%%%%%%%%%%%%%%%%%%%%%%%%%%%%%%%%%%%%%%%%%%%%%%%%%%%%%%%%%%%%%%%%%%%%%%%%%%%%%%%%%%%%%%%%%%%%%%%%%%%%%%%%%%
\medskip\noindent
\textit{Second--order term $B_t$.}
Similarly,
\[
B_t
= -\frac{1}{t^{d/2+1}}
\int_{U_x\widetilde M} e^{\frac d2\Psi_1(\Theta)}\Phi(\Theta)
\int_0^{R_t} r^{d+1} e^{-L(\Theta)^2 r^2/t}\,dr\,d\sigma(\Theta).
\]
Substituting $u=L(\Theta)r/\sqrt t$ gives
\[
\int_0^{R_t} r^{d+1} e^{-L(\Theta)^2 r^2/t}\,dr
= t^{\frac{d+2}{2}} L(\Theta)^{-(d+2)}
   \int_0^{L(\Theta)R_t/\sqrt t} u^{d+1} e^{-u^2}\,du.
\]
Since $R_t=t^\eta$, the upper limit is
\[
a_\Theta(t):=\frac{L(\Theta)R_t}{\sqrt t}
= L(\Theta)t^{\eta-\frac12}.
\]
Because $L$ is positive and continuous on the compact set $U_x\widetilde M$, there exists
$L_{\min}>0$ such that $L(\Theta)\ge L_{\min}$ for all $\Theta\in U_x\widetilde M$. Hence
$a_\Theta(t)\ge L_{\min}t^{\eta-\frac12}\to\infty$ uniformly in $\Theta$ as $t\downarrow0$. Moreover,
\[
a_\Theta(t)^2=L(\Theta)^2t^{2\eta-1}\ge L_{\min}^2t^{2\eta-1}.
\]
Using the standard tail estimate
\[
\int_a^\infty u^{d+1}e^{-u^2}\,du
\le C_d\,a^d e^{-a^2}\qquad (a\ge1),
\]
and absorbing the polynomial factor into the exponential by decreasing the constant in the exponent,
we obtain, for some $c>0$ independent of $t$ and $\Theta$,
\[
\int_{a_\Theta(t)}^\infty u^{d+1}e^{-u^2}\,du
=O\!\left(e^{-c t^{2\eta-1}}\right).
\]
Therefore, uniformly in $\Theta$,
\[
\int_0^{L(\Theta)R_t/\sqrt t} u^{d+1} e^{-u^2}\,du
= \int_0^\infty u^{d+1} e^{-u^2}\,du
  + O\!\left(e^{-c t^{2\eta-1}}\right).
\]
Thus
\[
\int_0^{R_t} r^{d+1} e^{-L(\Theta)^2 r^2/t}\,dr
= c_{d+1}\,L(\Theta)^{-(d+2)} t^{(d+2)/2}
  + O\!\left(t^{(d+2)/2} e^{-c t^{2\eta-1}}\right),
\]
where
\[
c_{d+1}:=\int_0^\infty u^{d+1}e^{-u^2}\,du
=\frac12\Gamma\!\left(\frac{d+2}{2}\right).
\]

%%%%%%%%%%%%%%%%%%%%%%%%%%%%%%%%%%%%%%%%%%%%%%%%%%%%%%%%%%%%%%%%%%%%%%%%ù

\medskip\noindent
\textit{Remainder $C_t$.}
Remaining terms involve $\int_0^{R_t} r^{d+2} e^{-c r^2/t}dr$, which satisfies
\[
\frac{1}{t^{d/2+1}}\int_0^{R_t} r^{d+2} e^{-c r^2/t}dr= O(\sqrt t),
\]
hence $C_t=O(\sqrt t)$. To see this, substitute $r=\sqrt{t}\,u$,
\begin{align*}
C_t:=\frac{1}{t^{d/2+1}}\int_0^{R_t} r^{d+2} e^{-cr^2/t}\,dr
&= t^{1/2}\!\int_0^{R_t/\sqrt t} u^{d+2} e^{-cu^2}\,du \\[-0.1cm]
&\le t^{1/2}\!\int_0^\infty u^{d+2} e^{-cu^2}\,du
= O(\sqrt t),
\end{align*}
since $R_t/\sqrt t=t^{\eta-1/2}\to\infty$ for $\eta\in(1/4,1/2)$.

\medskip\noindent
\textit{Effect of the error term $E(r,\Theta)$.}
Finally, since $E(r,\Theta)=O(r^{2+\delta})$, we have, uniformly for $0\le r\le R_t$,
\[
\bigl|e^{-E(r,\Theta)/t}-1\bigr|
\le C\,\frac{r^{2+\delta}}{t}.
\]
The contribution of this factor to the leading term $A_t$ is bounded by
\[
\frac{C}{t^{d/2+1}}\int_0^{R_t} r^{d}e^{-cr^2/t}\frac{r^{2+\delta}}{t}\,dr
=O(t^{-1/2+\delta/2}).
\]
The corresponding contribution to the second-order term $B_t$ is
\[
\frac{C}{t^{d/2+1}}\int_0^{R_t} r^{d+1}e^{-cr^2/t}\frac{r^{2+\delta}}{t}\,dr
=O(t^{\delta/2}),
\]
and is therefore of higher order. Thus this error contribution is $o(1)$ when $\delta>1$, and is $O(\sqrt t)$ when $\delta\ge2$.
\medskip
\nd \nd Finally, combining the estimates for $A_t$, $B_t$, $C_t$, the error contribution from $E(r,\Theta)$, and $I_t^{(2)}$, we obtain, in general,
\[
\sqrt{t}\,L_t^{g,\mathrm{int}}f(x)=O(1), \qquad t\downarrow0.
\]
Furthermore, for $\delta\ge2$, we have:
\[
L_t^{g,\mathrm{int}} f(x)
= -c_d\,p(x)\,\langle\nabla_{\widetilde g} f(x),b(x)\rangle_{\widetilde g}\, t^{-1/2}
  - c_{d+1}B_0(x)
  + O(\sqrt t),
\]
as claimed. 
\end{proof}

\subsection{Taylor expansion of extrinsic graph Laplacian }

In this section we prove Theorem \ref{thm:TE extrinsic GL}.
\begin{proof}

\nd \textbf{Step 1: infinitesimal metric and volume comparison:}

\nd For any $r>0,$ denote by $B_M^D(x;r)$ the intersection of the Euclidean ball with $M$, i.e. $B_{\Ddim}(x;r)\cap M,$
and $\frac{M-x}{\sqrt{t}}$ the set $\{\frac{m-x}{\sqrt{t}}:m\in M\}.$
Consider the orthogonal projection $\pi_x$ from $\setR^D$ onto the translated tangent space $x + T_xM$. Applying a rigid motion if needed, we can assume that $x=0_D$ and $x+T_xM=\Ddim \times \{0_{D-d}\} \simeq \setR^d$, so that $\pi_x$ can be seen as the orthogonal projection mapping a vector of $\Ddim$ onto its first $d$ coordinates. We let $\epsilon>0$ be such that $\pi_x$ is a smooth diffeomorphism of $B_{\widetilde{M}}(x;\sqrt{\epsilon}) \cap M$ onto its image
 $\Omega \df \pi_x (B_{\widetilde{M}}(x;\sqrt{\epsilon}) \cap M) \subset \ddim.$
 Following \cite{CoifmanLafon2006}, for a generic $y \in B_{\widetilde{M}}(x;\sqrt{\epsilon}) \cap M$ we set $u=(u_1 \dots u_d):=\pi_x(y).$ Note that $\pi_x$ acts as a local chart centered at $x,$ so that $\operatorname{int}(\Omega)$ is an open subset of $\ddim$ with $0_d=\pi_x(x)$ as interior or $\cC^0$ boundary point. Below, we express the Euclidean distance and the Riemannian volume measure on $M$ in the $u$-coordinates introduced above. We let $Q_{x,m}$ denote a generic homogeneous polynomial of degree $m.$

\begin{lemma}\label{lem:prep}
Fix $0< \eta < 1/2.$ Then as $t \downarrow 0$, for any $u \in \pi(\mathbb{B}_{t^\eta}^D(x) \cap M)$,
 \begin{align*}
    \norm{\pi^{-1}(u)}_{\setR^D}^2 &= \norm{u}_{\ddim}^2 + Q_{x,4}(u) + Q_{x,5}(u) + O(t^{6\eta}) && \text{(metric comparison)} \\
    \rho(u) 
    &= 1 + Q_{x,2}(u) + Q_{x,3}(u) + O(t^{4\eta}) && \text{(infinitesimal volume comparison)}
\end{align*}
where $\rho \in L^1(\Omega,\mathcal{L}^d)$ is the density of the push-forward measure $\pi_\# \mathrm{vol}_g$ on $\Omega$ with respect to $\mathcal{L}^d$.
\end{lemma}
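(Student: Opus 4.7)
The plan is to reduce the lemma to a standard graph–over–tangent–space computation, since the projection $\pi_x$ is a diffeomorphism of a neighbourhood of $x$ in $M$ onto its image in $T_xM\simeq\mathbb R^d$. Since $d\pi_x\!\mid_{T_xM}=\Id$, the implicit function theorem furnishes a smooth map $\phi:\Omega\to\mathbb R^{D-d}$ with $\phi(0)=0$ and $D\phi(0)=0$ such that $\pi_x^{-1}(u)=(u,\phi(u))$ for $u\in\Omega$. The smoothness of $M$ then gives a Taylor expansion $\phi(u)=\Psi_2(u)+\Psi_3(u)+O(\|u\|^4)$, where $\Psi_k$ is an $\mathbb R^{D-d}$-valued homogeneous polynomial of degree $k$. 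Using the defining size constraint $u\in\pi_x(B_{t^\eta}^D(x)\cap M)$, we have $\|u\|\le C t^\eta$, so every error of the form $O(\|u\|^N)$ is $O(t^{N\eta})$.

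For the metric comparison, the orthogonal decomposition $\pi_x^{-1}(u)=(u,\phi(u))$ gives
\[
\|\pi_x^{-1}(u)\|_{\mathbb R^D}^2=\|u\|_{\mathbb R^d}^2+\|\phi(u)\|_{\mathbb R^{D-d}}^2.
\]
Squaring the expansion of $\phi$ produces
\[
\|\phi(u)\|^2=\langle\Psi_2(u),\Psi_2(u)\rangle+2\langle\Psi_2(u),\Psi_3(u)\rangle+O(\|u\|^6),
\]
and the first two terms are homogeneous polynomials of degrees $4$ and $5$ in $u$, which we label $Q_{x,4}(u)$ and $Q_{x,5}(u)$ respectively. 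The remainder is $O(t^{6\eta})$ by the size bound, establishing the first identity.

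For the infinitesimal volume comparison, note that $\rho$ is precisely the Jacobian of the parametrisation $\psi(u):=(u,\phi(u))$, so $\rho(u)=\sqrt{\det G(u)}$ with
\[
G(u)=I_d+D\phi(u)^{\!\top} D\phi(u).
\]
Since $D\phi(u)=D\Psi_2(u)+D\Psi_3(u)+O(\|u\|^3)$ has entries starting at degree $1$, the matrix $B(u):=D\phi(u)^{\!\top} D\phi(u)$ has entries starting at degree $2$, namely $B(u)=B_2(u)+B_3(u)+O(\|u\|^4)$. Expanding $\det(I_d+B)=1+\mathrm{tr}(B)+O(\|B\|^2)$ and then $\sqrt{1+s}=1+\tfrac12 s+O(s^2)$, the contributions of orders $2$ and $3$ in $u$ are isolated as homogeneous polynomials $Q_{x,2}(u):=\tfrac12\mathrm{tr}(B_2(u))$ and $Q_{x,3}(u):=\tfrac12\mathrm{tr}(B_3(u))$. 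Since $B(u)^2=O(\|u\|^4)$ and the neglected terms of $\sqrt{\cdot}$ are of the same order, the remainder is $O(\|u\|^4)=O(t^{4\eta})$, giving the second identity.

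The routine algebra is straightforward; the only point requiring care is bookkeeping of which homogeneous degree each term contributes after the two nested expansions (first $\det(I+B)$, then $\sqrt{\cdot}$), and the conversion of $O(\|u\|^N)$ into $O(t^{N\eta})$ via the truncation radius $R_t=t^\eta$ inherited from Proposition \ref{prop:truncation-error}. No curvature identity of $M$ is needed for this lemma—only the tangency $D\phi(0)=0$ and smoothness of the embedding; the actual identification of $Q_{x,k}$ with second fundamental form data of $M$ at $x$ can be postponed to wherever the constants are used downstream.
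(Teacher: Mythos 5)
Your proof is correct and is essentially the argument the paper defers to: the paper proves this lemma by citing Lemma~7 in Appendix~B of Coifman--Lafon, which is exactly the graph-over-tangent-space computation you carry out (write $\pi_x^{-1}(u)=(u,\phi(u))$ with $\phi(0)=0$, $D\phi(0)=0$, expand $\|\phi(u)\|^2$ and $\sqrt{\det(I_d+D\phi^{\top}D\phi)}$, and convert $O(\|u\|^N)$ to $O(t^{N\eta})$ via $\|u\|\le t^{\eta}$). One caveat worth recording: what you compute is the density of $\pi_\#\mathrm{vol}_{\tilde g}$ for the \emph{smooth induced} metric $\tilde g$, which is the version actually used later in the proof of Theorem~\ref{thm:TE extrinsic GL}; as literally stated with the angularly conformal metric $g=e^{\Psi_1(\Theta)}\tilde g$, the density would carry the extra discontinuous factor $e^{\frac d2\Psi_1(\Theta(u))}$ and the expansion $1+Q_{x,2}+Q_{x,3}+O(t^{4\eta})$ would fail, so the lemma's ``$\mathrm{vol}_g$'' should be read as ``$\mathrm{vol}_{\tilde g}$''.
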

 
\begin{proof}
    It follows line by line from \cite{CoifmanLafon2006} (see Appendix B, Lemma 7), with the assumption $\norm{y-x}<t^{\eta}$ instead of $\norm{y-x}<t^{1/2}$. This is weaker for small $t>0$, since $t^{\eta}>t^{1/2}$ when $0<\eta<1/2$.
\end{proof}

\nd \textbf{Step 2:}

\begin{lemma}[\textbf{comparing angles after projection and after inverse exponential map}]\label{lem:comparing projection and log}
Let $\widetilde{M} \subset \mathbb{R}^D$ be a smooth embedded submanifold endowed with the Riemannian metric induced from $\mathbb{R}^D$. 
Fix $x \in \widetilde{M}$ and let $\pi_x : \mathbb{R}^D \to T_x\widetilde{M}$ denote the orthogonal projection onto the tangent space at $x$, translated by $x,$ i.e. onto the affine subspace $x+T_x\widetilde{M}$ of $\Ddim.$
For any point $y$ in a sufficiently small normal neighborhood $B(x;r)$ of $x$, i.e. a neighborhood on which $\exp_x^{\tilde g}$ is a diffeomorphism, define $v:=\log_x^{\tilde g}y\in T_x\widetilde M$ and $s=d_{\tilde g}(x,y)=\|v\|_{\tilde g}$.
Then for all $y \in \widetilde{M}$ with $s < r$, we have:
\[
\frac{\pi_x(y)}{\|\pi_x(y)\|}
=\frac{\log_x^{\tilde g}y}{\|\log_x^{\tilde g}y\|_{\tilde g}}
+O(s^{2}),
\qquad \text{as } y \to x.
\]

\end{lemma}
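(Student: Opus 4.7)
The strategy is a short Taylor-expansion computation in the ambient Euclidean space. First I will exploit the fact that, viewed as a curve in $\mathbb{R}^D$, the geodesic $\gamma(t) := \exp_x(tv)$ on $\widetilde{M}$ has vanishing \emph{intrinsic} acceleration, so its ambient second derivative at $t=0$ equals the second fundamental form $\mathrm{II}_x(v,v)$, which lies in the normal space $(T_x\widetilde{M})^\perp$. A Taylor expansion of $\gamma$ at $t=1$ will then give
\[
y = \exp_x(v) = x + v + \tfrac{1}{2}\mathrm{II}_x(v,v) + R(v), \qquad \|R(v)\|_{\mathbb{R}^D} = O(s^3),
\]
where $s = \|v\|$. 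This is a standard property of the isometric embedding and requires no extra hypothesis beyond smoothness.

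Next I will apply the orthogonal projection $\pi_x$ onto the affine subspace $x + T_x\widetilde{M}$. Working in the coordinates introduced in Step~1, where we already assumed $x = 0_D$ and $T_x\widetilde{M}$ is identified with $\mathbb{R}^d \times \{0_{D-d}\}$, the middle term $\tfrac12\mathrm{II}_x(v,v)$ is annihilated by $\pi_x$ since it is purely normal. Therefore $\pi_x(y) = v + \pi_x(R(v)) = v + O(s^3)$, where now the remainder lies in $T_x\widetilde{M}$.

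Finally, I will compute the Euclidean norm of $\pi_x(y)$: from $\pi_x(y) = v + O(s^3)$ and $\|v\|=s$, the Cauchy--Schwarz inequality yields $\|\pi_x(y)\|^2 = s^2 + O(s^4)$, hence $\|\pi_x(y)\| = s + O(s^3)$. Dividing these two expansions and using $1/(1+O(s^2)) = 1 + O(s^2)$ produces
\[
\frac{\pi_x(y)}{\|\pi_x(y)\|} = \frac{v}{s} + O(s^2) = \frac{\log_x y}{\|\log_x y\|} + O(s^2),
\]
as claimed. There is no real obstacle here: the only subtle point is that the leading term of $\pi_x(y)$ has size $s$ while its correction has size $s^3$, so the ratio contributes an error of order $s^2$, not $s$. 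This gain of one power of $s$ is precisely what the vanishing of the tangential part of $\gamma''(0)$, i.e.\ the normality of $\mathrm{II}_x(v,v)$, delivers.
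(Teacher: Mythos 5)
Your proposal is correct and follows essentially the same route as the paper's proof: the extrinsic Taylor expansion $y=x+v+\tfrac12\mathrm{II}_x(v,v)+O(s^3)$ of the embedded geodesic, the observation that $\pi_x$ annihilates the purely normal second-order term, and the resulting $O(s^3)$ perturbation of both the vector and its norm, which combine to give an $O(s^2)$ error in the unit vector. The only cosmetic difference is that you estimate $\|\pi_x(y)\|$ via Cauchy--Schwarz on $\|v+r\|^2$ while the paper uses the triangle inequality directly; both yield the same bound.
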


\begin{proof}
    
We work in normal coordinates at $x$ on $\widetilde{M}$. For $y$ in a normal neighborhood, write
$v=\log_x^{\tilde g}y\in T_x\widetilde{M}$ and $s=\|v\|=d_{\tilde g}(x,y)$. 
%%%%%%%%%%%%%%%%%%%%%%%%%%%%%%%%%%%%%%%%%%%%%%%%%%%%%%%%%%%%%%
The standard extrinsic expansion of the embedded exponential map yields
\[
y-x=v+\frac12\,\mathrm{II}_x(v,v)+R_3(v),
\]
where $\mathrm{II}_x(v,v)\in (T_x\widetilde M)^\perp$ is the second fundamental form at $x$, and $\|R_3(v)\|\le Cs^3$ for $s$ sufficiently small. This follows by Taylor expanding the fixed isometric embedding $\Phi:\widetilde M\to\mathbb R^D$ along the geodesic $\gamma$ with $\gamma(0)=x$ and $\dot\gamma(0)=v$, and using the Gauss--Weingarten formula; see, for instance, \citet{CoifmanLafon2006}, Appendix B, Lemma 7.

%%%%%%%%%%%%%%%%%%%%%%%%%%%%%%%%%%%%%%%%%%%%%%%%%%%%%%%%%%%%

Applying the orthogonal projection $\pi_x:\mathbb{R}^D \to T_x\widetilde{M}$ to both sides gives
\[
\pi_x(y - x)
\;=\;
\pi_x(v)
\;+\;
\tfrac{1}{2}\,\pi_x\!\big(\mathrm{II}_x(v,v)\big)
\;+\;
\pi_x R_3(v).
\]
Now use the facts that 
$\pi_x(v) = v$ (since $v\in T_x\widetilde{M}$) and 
$\pi_x(\mathrm{II}_x(v,v)) = 0$ (since $\mathrm{II}_x(v,v)$ is normal). 
Setting $r := \pi_x R_3(v)$ yields
\[
\pi_x(y - x)
\;=\;
v + r,
\qquad
\|r\| \le C s^3\quad\Longrightarrow\quad
\big|\|\pi_x(y)\|-s\big| \le C s^{3},
\qquad\text{and thus}\qquad
\|\pi_x(y)\|\ge s-C s^{3}\ge \tfrac12 s, s \text{ small.}
\]
Let $w:=\pi_x(y)=v+r$. Then
\[
\left\|\frac{w}{\|w\|}-\frac{v}{\|v\|}\right\|
\;\le\;
\frac{\|r\|}{\|v\|} 
\;+\;
\|v\|\left|\frac{1}{\|w\|}-\frac{1}{\|v\|}\right|
\;\le\;
\frac{C s^{3}}{s}
\;+\;
s \cdot \frac{|\|w\|-\|v\||}{\|w\|\,\|v\|}
\;\le\;
C s^{2} + s\cdot\frac{C s^{3}}{(\tfrac12 s)\,s}
\;=\;O(s^{2}),
\]
where we used $\|r\|\le C s^3$, $|\|w\|-\|v\||\le C s^3$, and $\|w\|\ge s/2$ for $s$ small.
This is equivalent to
\[
\left\|\frac{w}{\|w\|}-\frac{v}{\|v\|}\right\|
= O(s^{2}),
\qquad s=d_{\tilde g}(x,y)\to 0,
\]
as claimed.
\end{proof}
\nd \textbf{Step 3: }Fix $\epsilon>0, 1/4 < \eta < 1/2,$ and consider $t>0$ small enough so $t^{\eta}< \sqrt{\epsilon}$. Then
\begin{equation}
\begin{aligned}
& L^{g, ext}_tf(x)\\
 &:=\frac{1}{t^{d/2+1}}
\int_{M} e^{-\,\frac{\|x-y\|_{\Ddim}^{2}}{t}} \big(f(x)-f(y)\big)\,p(y)\,d\mathrm{vol}_{g}(y)\\
&= \frac{1}{t^{d/2+1}}
\int_{B_M^D(x;t^{\eta})} e^{-\,\frac{\|x-y\|_{\Ddim}^{2}}{t}} \big(f(x)-f(y)\big)\,p(y)\,d\mathrm{vol}_{g}(y) +O(t^{-d/2-1}e^{-t^{2\eta-1}}) [\text{cf. Proposition}\,\ref{prop:truncation-error}]  \\
&=\frac{1}{t^{d/2+1}}
\int_{B_M^D(x;t^{\eta})} e^{-\,\frac{\|x-y\|_{\Ddim}^{2}}{t}} \big(f(x)-f(y)\big)\,p(y)\, e^{\frac{d}{2}\Psi_1(\Theta(y))}d\mathrm{vol}_{\tilde{g}}(y) + O(t^{-d/2-1}e^{-t^{2\eta-1}}), \Theta(y):=\frac{\log_x^{\tilde g} y}{\|\log_x^{\tilde g} y\|_{\tilde g}}\\
&= \frac{1}{t^{d/2+1}}
\int_{B_M^D(x;t^{\eta})} e^{-\,\frac{\|x-y\|_{\Ddim}^{2}}{t}} \big(f(x)-f(y)\big)\,p(y)\, e^{\frac{d}{2}\Psi_1\left(\frac{\pi_x(y)}{\norm{\pi_x(y)}} + O(d^2_{\tilde{g}}(x,y))\right)}d\mathrm{vol}_{\tilde{g}}(y)+O(t^{-d/2-1}e^{-t^{2\eta-1}}) , (\text{cf. Lemma \ref{lem:comparing projection and log}})\\
 &= \frac{1}{t^{d/2+1}}
\int_{B_M^D(x;t^{\eta})} e^{-\,\frac{\|x-y\|_{\Ddim}^{2}}{t}} \big(f(x)-f(y)\big)\,p(y)\, e^{\frac{d}{2}\Psi_1\left(\frac{\pi_x(y)}{\norm{\pi_x(y)}} + O(\norm{x-y}^2)\right)}d\mathrm{vol}_{\tilde{g}}(y)+ O(t^{-d/2-1}e^{-t^{2\eta-1}}) \\
&=  \frac{1}{t^{d/2+1}}
\int_{B_M^D(x;t^{\eta})} e^{-\,\frac{\|x-y\|_{\Ddim}^{2}}{t}} \big(f(x)-f(y)\big)\,p(y)\, e^{\frac{d}{2}\Psi_1\left(\frac{\pi_x(y)}{\norm{\pi_x(y)}} + O(t^{2\eta})\right)}d\mathrm{vol}_{\tilde{g}}(y)+ O(t^{-d/2-1}e^{-t^{2\eta-1}}), (\because \norm{x-y}<t^{\eta} )\\  
&\text{(Using compactness and equivalence of the intrinsic and extrinsic distances on the compact ball)}\\
&=\frac{1}{t^{d/2+1}}
\int_{\Omega \cap B^d(0;t^{\eta})}
\exp\left(
    -\frac{\|u\|_{\mathbb{R}^d}^2}{t}
    -\frac{ Q_{x,4}(u) + Q_{x,5}(u) + O(t^{6\eta}) }{t}
\right)
\left( \tilde{f}(0_d) - \tilde{f}(u) \right)
\tilde{p}(u)\,\\
&\exp\!\left(
    \frac{d}{2}\,\Psi_1\!\left(\frac{u}{\|u\|} + O(t^{2\eta}))\right)
\right)  \times  \left( 1 + Q_{x,2}(u) + Q_{x,3}(u) + O(t^{4\eta})\right) \, du +  O(t^{-d/2-1}e^{-t^{2\eta-1}}), \Omega:= \pi_x(B_M(x;\sqrt{\epsilon})), u:=\pi_x(y)\\
&= \frac{1}{t} \int_{\frac{\Omega}{\sqrt{t}} \cap B^d(0;t^{\eta-1/2})}
    e^{-\left( \|\zeta\|^2 
        + \left( \frac{Q_{x,4}(\sqrt{t}\,\zeta)
                   + Q_{x,5}(\sqrt{t}\,\zeta)
                   + O(t^{6\eta})}{t} \right) \right)}
    \left( \tilde{f}(0_d) - \tilde{f}(\sqrt{t}\,\zeta) \right)
    \tilde{p}(\sqrt{t}\,\zeta),\\
&\exp\!\left(
    \frac{d}{2}\,\Psi_1\!\left(\frac{\zeta}{\|\zeta\|} + O(t^{2\eta}))\right)
\right)  \times  \left( 1 + Q_{x,2}(\sqrt{t}\zeta) + Q_{x,3}(\sqrt{t}\zeta) + O(t^{4\eta})\right) \, d\zeta +  O(t^{-d/2-1}e^{-t^{2\eta-1}}),
    (\text{substituting }\zeta := u/\sqrt{t})
    \\
&= \frac{1}{t} \int_{\frac{\Omega}{\sqrt{t}} \cap B^d(0;t^{\eta-1/2})}
    e^{-\left( \|\zeta\|^2 
        + t Q_{x,4}(\zeta)
        + t^{3/2} Q_{x,5}(\zeta)
        + O(t^{6\eta-1}) \right)}
    \left( \tilde{f}(0_d) - \tilde{f}(\sqrt{t}\,\zeta) \right)
    \tilde{p}(\sqrt{t}\,\zeta)  \exp\!\left(
    \frac{d}{2}\,\Psi_1\!\left(\frac{\zeta}{\|\zeta\|} + O(t^{2\eta}))\right)
\right) \\
&\qquad\qquad\times 
    \left( 1 + t Q_{x,2}(\zeta) + t^{3/2} Q_{x,3}(\zeta) + O(t^{4\eta}) \right)
    \, d\zeta \qquad (\text{Using homogeneity of the polynomials $Q_m$})\\
&= \frac{1}{t}\int_{\frac{\Omega}{\sqrt{t}}\cap B(0,t^{\eta-1/2})}
   e^{-\|\zeta\|^2}\,
   e^{\frac{d}{2}\Psi_1(\zeta/\|\zeta\|)}\,
   \Bigl(-\sqrt{t}\,\langle\nabla_{\tilde{g}} f (x),\zeta\rangle
        -\tfrac{t}{2}\,\nabla^2_{\tilde{g}} f(x)(\zeta,\zeta)
        +O(t^{3/2}\|\zeta\|^3)\Bigr) \\
&\qquad\qquad\qquad\qquad \times
   \Bigl(p(x)+\sqrt{t}\,\langle\nabla_{\tilde{g}}  p(x),\zeta\rangle
        +O(t\|\zeta\|^2)\Bigr)\bigl(1+o(1)\bigr)\,d\zeta
\\
&= -\frac{1}{\sqrt{t}}\int_{\frac{\Omega}{\sqrt{t}}\cap B(0,t^{\eta-1/2})}
   e^{-\|\zeta\|^2}\,
   e^{\frac{d}{2}\Psi_1(\zeta/\|\zeta\|)}\,
   p(x)\,\langle\nabla_{\tilde{g}}  f(x),\zeta\rangle\,d\zeta
\\
&\quad
   -\int_{\frac{\Omega}{\sqrt{t}}\cap B(0,t^{\eta-1/2})}
   e^{-\|\zeta\|^2}\,
   e^{\frac{d}{2}\Psi_1(\zeta/\|\zeta\|)}\,
   \Bigl(\langle\nabla_{\tilde{g}} f(x),\zeta\rangle\langle\nabla_{\tilde{g}} p(x),\zeta\rangle
        +\tfrac{1}{2}p(x)\,\nabla_{\tilde{g}}^2 f(x)(\zeta,\zeta)\Bigr)\,d\zeta
   +O(t^{1/2})
\\
&= -\frac{1}{\sqrt{t}}\int_{U_x\widetilde M}
   e^{\frac{d}{2}\Psi_1(\Theta)}\,
   p(x)\,\langle\nabla_{\tilde{g}}  f(x),\Theta\rangle
   \left(\int_0^\infty e^{-r^2}r^{d}\,dr\right)
   d\sigma(\Theta)
\\
&\quad
   -\int_{U_x\widetilde M}
   e^{\frac{d}{2}\Psi_1(\Theta)}\,
   \Bigl(\langle\nabla_{\tilde{g}}  f(x),\Theta\rangle\langle\nabla_{\tilde{g}}  p(x),\Theta\rangle
        +\tfrac{1}{2}p(x)\,\nabla_{\tilde{g}} ^2 f(x)(\Theta,\Theta)\Bigr)
   \left(\int_0^\infty e^{-r^2}r^{d+1}\,dr\right)\bigl(1+o(1)\bigr)
   d\sigma(\Theta)
   +O(t^{1/2})
\\
&\text{(using polar decomposition } \zeta:=r\Theta,\ d\zeta=r^{d-1}dr\,d\sigma(\Theta);
\text{ the linear and quadratic terms in }\zeta \\
& \text{ give the radial powers }r^d
\text{ and }r^{d+1}\text{, respectively)}
\\
&= (1+o(1))\left(-\frac{c_d}{\sqrt{t}}\,p(x)\,B_M f(x)
   \;-\;c_{d+1}\Bigl(p(x)\,A_M f(x)
                    +r(p,f)_M(x)\Bigr)\right)
   +O(t^{1/2}) .
\end{aligned}
\end{equation}

\nd where in the last step we could pull the term $1+o(1), t\downarrow 0$ that is uniform w.r.t. $\zeta,$ outside of the integral if $\eta >1/4.$ Recall that above
\begin{align*}
c_d &:= \int_0^\infty e^{-r^2}r^{d}\,dr,
c_{d+1}:= \int_0^\infty e^{-r^2}r^{d+1}\,dr, B_M f(x)
:= \int_{U_x\widetilde M} e^{\frac{d}{2}\Psi_1(\Theta)}
    \,\langle\nabla_{\tilde{g}}  f(x),\Theta\rangle\,d\sigma(\Theta),\\
A_M f(x)
&:= \frac{1}{2}\int_{U_x\widetilde M} e^{\frac{d}{2}\Psi_1(\Theta)}
    \,\nabla^2_{\tilde{g}}  f(x)(\Theta,\Theta)\,d\sigma(\Theta),
r(p,f)_M(x):= \int_{U_x\widetilde M} e^{\frac{d}{2}\Psi_1(\Theta)}
    \,\langle\nabla_{\tilde{g}}  f(x),\Theta\rangle
     \,\langle\nabla_{\tilde{g}}  p(x),\Theta\rangle\,d\sigma(\Theta).
\end{align*}

\end{proof}

\section{Numerical simulations}\label{scn:simulations}

\subsection{Simulations of continuous \textit{intrinsic} graph Laplacian on a punctured disk}

The purpose of the following two simulations is to show that the graph Laplacian may \textit{not} detect or distinguish all types of \textit{nonremovable} singularities. Here we have two examples of such singularities, and the graph Laplacian only blows up in the first one as the bandwidth $t$ approaches $0,$ but not in the second one. Of course, Theorem \ref{thm:Asymptotics of Continuous graph Laplacian} shows us that graph Laplacian cannot detect removable singularities with controlled curvature blowups, cf. Definition \ref{def:controlled-blow-up of curvature}.

We illustrate the asymptotic limit of the  
$\sqrt{t}\,L_t f(x)$ on a two-dimensional locally angularly conformal (cf. Definition \ref{def:locally angularly conformal}) metric 
by direct numerical quadrature. We fix $p \equiv 1$ and consider 
\[
g = a(\theta)^2\,(dr^2 + r^2 d\theta^2), 
\qquad a(\theta) = 1 + 0.4\cos\theta, \qquad \Psi_1(\theta)=  2 \, \log \,a(\theta)
\]
%%%%%%%%%%%%%%%%%%%%%%%%%%%%%%%%%%%%%%%%%%%%%%%%ù

\subsubsection{\textbf{Visualisation of the local metric geometry near the singular point.}}
To complement Table~\ref{tab:scaled_GL}, we include two figures that illustrate the local geometry of the angularly conformal metric used in the simulation. 
In the numerical experiment, the distance from the singular point to a nearby point $(r,\theta)$ is approximated by the length of the fixed-angle radial path, namely
\[
d_g\bigl(0,(r,\theta)\bigr)\approx a(\theta)\,r.
\]
Accordingly, for a fixed radius $\rho>0$, the corresponding approximate balls $B_g(0;\rho)$ centered at $(0,0)$ and of radius $\rho>0$ are given by 

$$\{(r, \theta): r< \frac{\rho}{a(\theta)}\}= \bigsqcup_{0\le \theta < 2\pi}\{r: r< \frac{\rho}{a(\theta)}\}.$$

\nd Thus, Figure \ref{fig:local-approx-balls-origin} displays approximate origin-centered metric balls $B_g(0;\rho)$  associated with the distance approximation used in the simulation. In particular, it makes visible the angular anisotropy of the metric: for fixed $\rho$, the boundary is no longer a Euclidean circle, but is stretched or compressed along the $\theta$-direction depending on the value of $a(\theta)$.

\begin{figure}[t]
\centering
\includegraphics[width=0.5\textwidth]{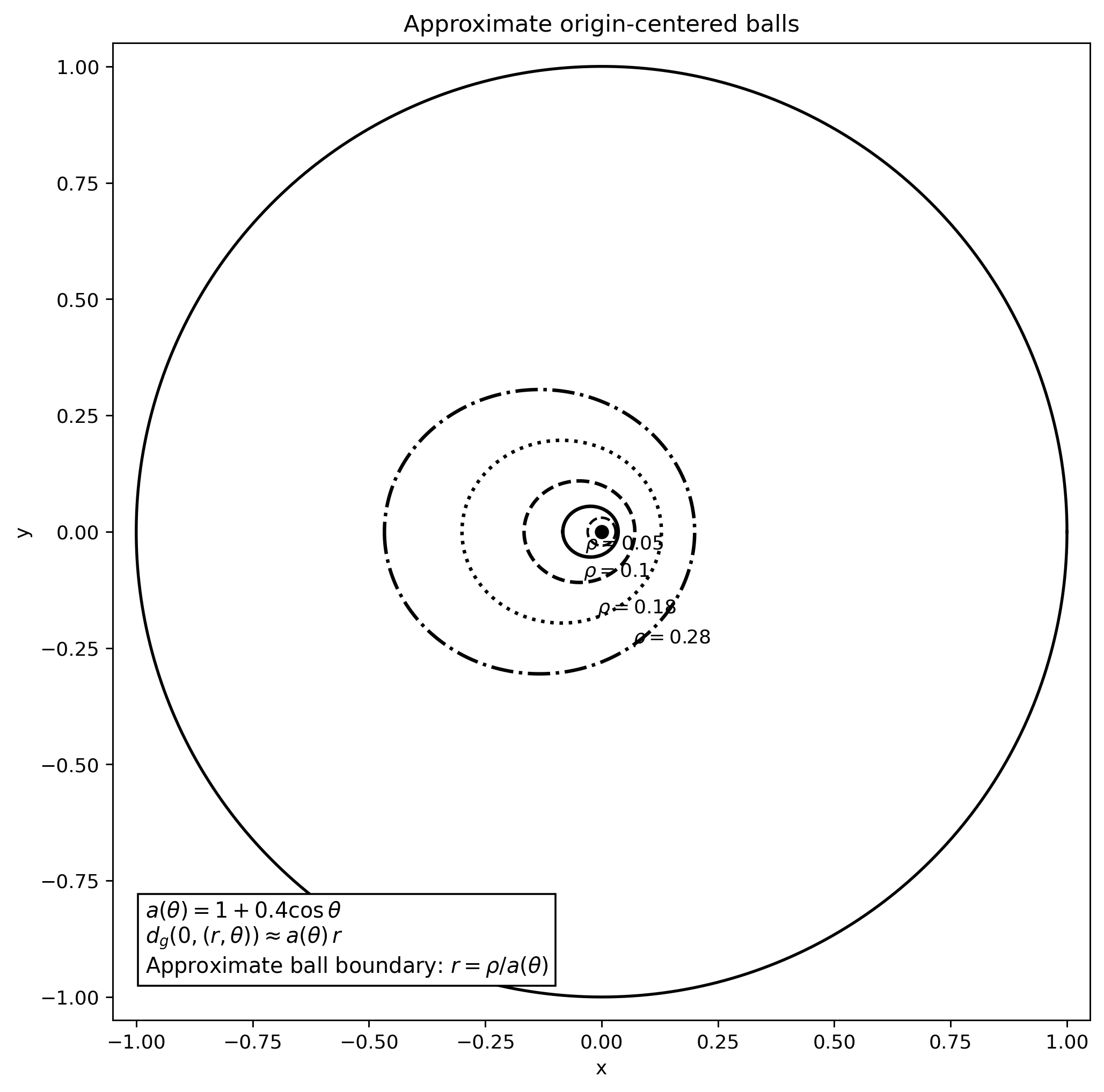}
\caption{Approximate origin-centered metric balls for the angularly conformal metric
\(
g=a(\theta)^2(dr^2+r^2d\theta^2)
\)
with
\(
a(\theta)=1+0.4\cos\theta.
\)
The boundaries shown correspond to the approximation
\(
d_g(0,(r,\theta))\approx a(\theta)\,r
\),
hence are given by
\(
r=\rho/a(\theta)
\)
for several values of $\rho$.}
\label{fig:local-approx-balls-origin}
\end{figure}

To further illustrate the geometry, we also plot several numerical geodesic rays in the punctured disk. Since the metric is singular at the origin, these rays are launched from a small circle of radius $0.05$ around the puncture and are then evolved by numerically solving the geodesic equation for the metric \(g\). Figure \ref{fig:local-geodesic-rays} compares these numerical geodesic rays with the corresponding Euclidean straight radial segments. 
%%%

\nd Now for a radial curve $s\mapsto (r(s), \theta_0)$ to be geodesic, we must have $a'(\theta_0)=0.$ In the present example,
\[
a(\theta)=1+0.4\cos\theta,
\qquad
a'(\theta)=-0.4\sin\theta,
\]
so this happens exactly for
\(
\theta_0=0,\pi.
\)
Hence only the rays launched in the directions \(\theta_0=0\) and \(\theta_0=\pi\) remain radial.
 %%%
 Since in the present example
\[
a(\theta)=1+0.4\cos\theta,
\qquad
a'(\theta)=-0.4\sin\theta,
\]
this happens exactly for \(\theta_0=0\) and \(\theta_0=\pi\). Hence the rays launched in those directions remain radial, whereas rays started at noncritical angles bend, as seen in Figure \ref{fig:local-geodesic-rays}.

\begin{figure}[t]
\centering
\includegraphics[width=0.5\textwidth]{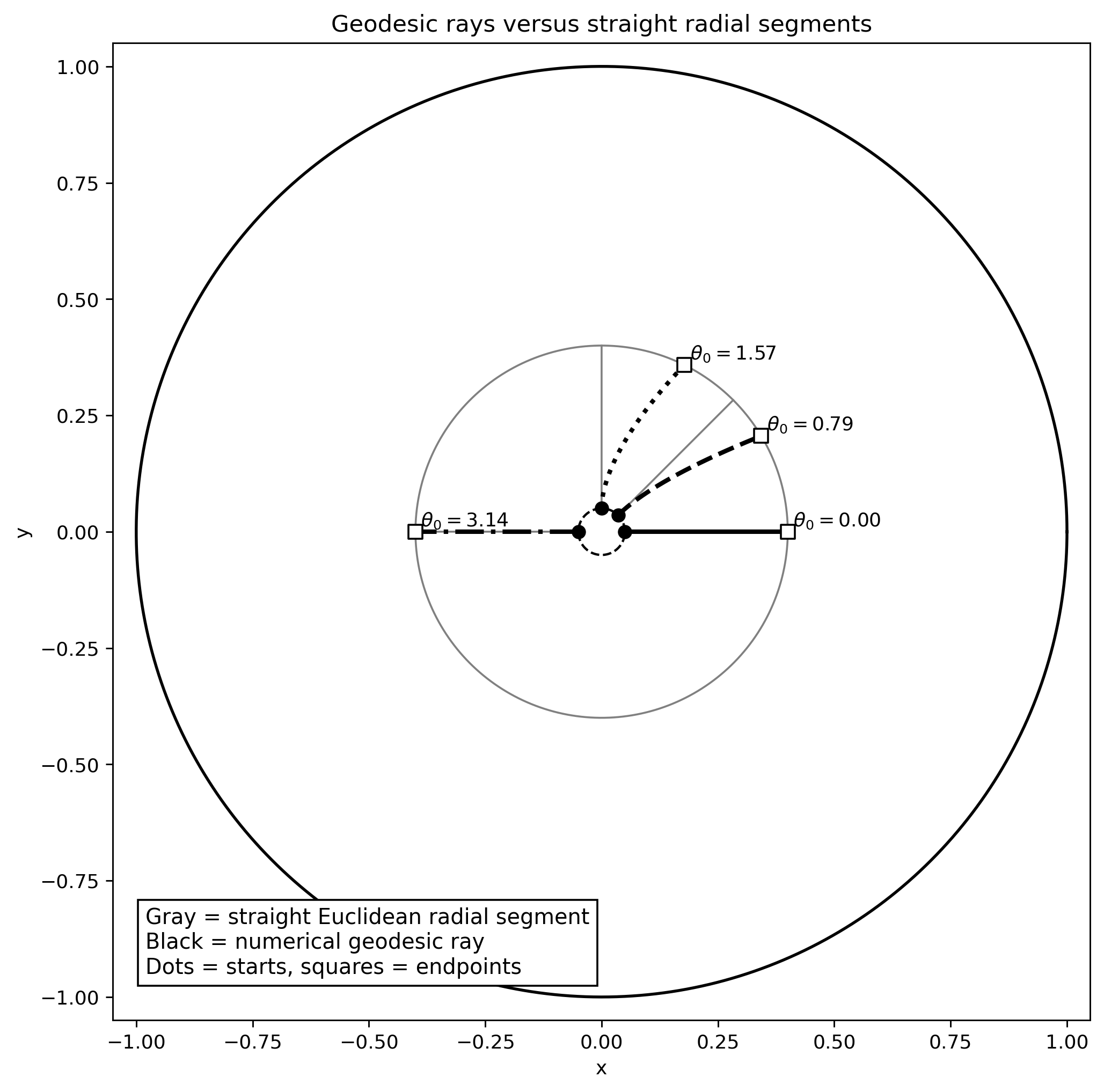}
\caption{Numerical geodesic rays for the metric
\(
g=a(\theta)^2(dr^2+r^2d\theta^2)
\),
compared with straight Euclidean radial segments. The rays are launched from a small circle around the puncture. The comparison shows that the directions \(\theta_0=0\) and \(\theta_0=\pi\) remain radial, whereas rays started at noncritical angles bend under the influence of the angularly varying conformal factor.}
\label{fig:local-geodesic-rays}
\end{figure}

%%%%%%%%%%%%%%%%%%%%%%%%%%%%%%%%%

\subsubsection{\textbf{Asymptotic behavior of graph Laplacian}} In the numerical evaluation, we \textit{approximate} the intrinsic distance $d_g(0,(r,\theta))$ by the length of the radial geodesic at fixed $\theta$, namely $d_g(0,(r,\theta))\approx a(\theta)\,r$: equivalently, we restrict the kernel to \textit{radial geodesics} when evaluating the intrinsic graph Laplacian. 
The \textit{test function} is chosen as 
\(
f(x,y) = 1.2x + 0.7y + 0.05(x^2 - 0.5y^2),
\)
so that $\nabla f(0) = (1.2,0.7)$. With the above approximation, the \textit{intrinsic} graph Laplacian is evaluated at the origin using
a quadrature rule for several small values of~$t$:
\[
L_t f(0) \approx \frac{1}{t^{2}}
\int_{0}^{2\pi}\!\!\int_{0}^{1}
e^{-\frac{a(\theta)^2 r^2}{t}}
\bigl(f(0)-f(r\cos\theta,r\sin\theta)\bigr)
\,a(\theta)^2\,r\,dr\,d\theta.
\]
(Above, we truncate the radial integral at $r=1$ since, for small $t$, the exponential factor
$e^{-a(\theta)^2 r^2/t}$ makes the contribution from larger $r$ negligible.)

Table~\ref{tab:scaled_GL} numerically shows that, for $d:=2$ here,
\[
\sqrt{t}\,L_t f(0)
\;\longrightarrow\;
-\,C_2\,\Big\langle \nabla f(0),
\int_{\mathbb{S}^1}\frac{(\cos \theta,\sin \theta)}{a(\theta)}\,d\theta \Big\rangle,
\qquad C_2 = \tfrac{\sqrt{\pi}}{4}.
\]
(Note that although Theorem \ref{thm:blowup of GL near isolated singularity} involves the factor $L(\theta)^{-(d+1)}$, in the present two-dimensional conformal example $L(\theta)=a(\theta)$ and $e^{\frac d2\Psi_1(\theta)}=a(\theta)^2$, so that the combined weight is $a(\theta)^2 a(\theta)^{-3}=a(\theta)^{-1}$, which is exactly the factor appearing in the limit above.)
The integral over $\mathbb{S}^1$ yields
$\displaystyle \int_{\mathbb{S}^1}\frac{(\cos \theta,\sin \theta)}{a(\theta)}\,d\theta = (-1.4308,0)$.
Table~\ref{tab:scaled_GL} compares the computed values of $\sqrt{t}\,L_t f(0)$ with the theoretical limiting constant, together with the associated absolute and relative errors for the $t$-values below.

\begin{table}[h!]
\centering
\caption{Scaled continuous graph Laplacian versus the theoretical limiting constant
($p\equiv1$, $a(\theta)=1+0.4\cos\theta$).}
\label{tab:scaled_GL}
\begin{tabular}{ccccc}
\toprule
$\text{Bandwidth }t$ & $\sqrt{t}\,L_t f(0)$ & Theoretical limit & Abs.\ error & Rel.\ error \\
\midrule
$1.0\times10^{-1}$ & 0.682163 & 0.760824 & 0.078661 & 0.1034 \\
$5.0\times10^{-2}$ & 0.743641 & 0.760824 & 0.017183 & 0.0226 \\
$2.0\times10^{-2}$ & 0.750940 & 0.760824 & 0.009884 & 0.0130 \\
$1.0\times10^{-2}$ & 0.753835 & 0.760824 & 0.006989 & 0.0092 \\
$5.0\times10^{-3}$ & 0.755882 & 0.760824 & 0.004942 & 0.0065 \\
$2.0\times10^{-3}$ & 0.757982 & 0.760824 & 0.002842 & 0.0037 \\
$1.0\times10^{-3}$ & 0.758995 & 0.760824 & 0.001829 & 0.0024 \\
$5.0\times10^{-4}$ & 0.759886 & 0.760824 & 0.000938 & 0.0012 \\
\bottomrule
\end{tabular}
\end{table}

\nd The results confirm that $\sqrt{t}\,L_t f(0)$ converges to the theoretical limiting constant
$-C_2\left\langle\nabla f(0),
\int_{\mathbb{S}^1}(\cos \theta,\sin \theta)/a(\theta)\,d\theta\right\rangle$ as $t\downarrow0$.
This provides a direct numerical verification of the limiting behavior established in Theorem \ref{thm:blowup of GL near isolated singularity} for $d=2.$ Figure \ref{fig:table1-as-graph}
shows the same data graphically.

\begin{figure}[t]
\centering
\includegraphics[width=0.72\textwidth]{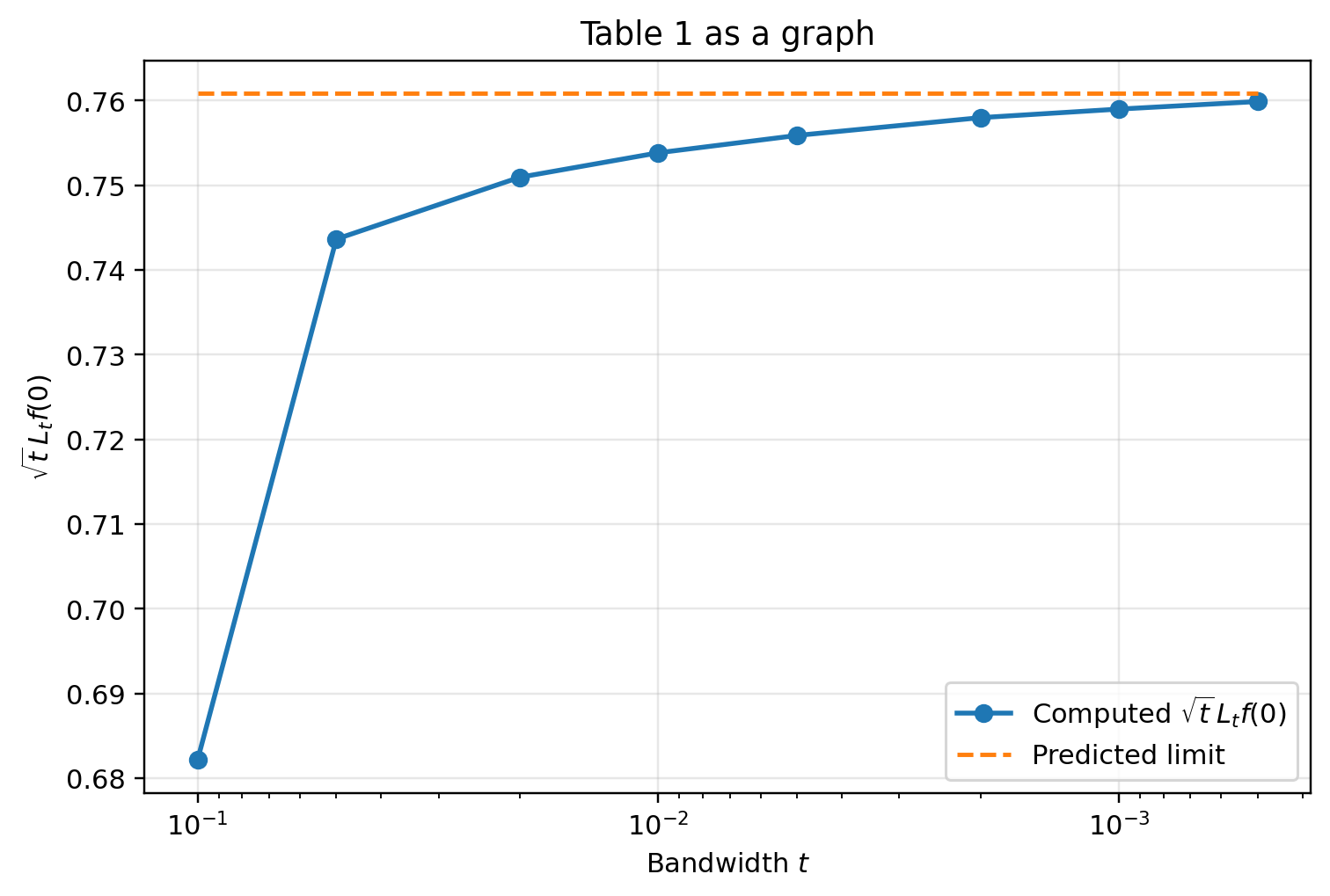}
\caption{Graphical representation of Table~\ref{tab:scaled_GL}. The computed values of \(\sqrt{t}\,L_tf(0)\) are plotted against the bandwidth \(t\), together with the theoretical limiting constant. The legend label ``Predicted limit'' in the plot refers to this theoretical limiting constant. The plot shows convergence of the numerical values to the theoretical constant as \(t \downarrow 0\).}
\label{fig:table1-as-graph}
\end{figure}

\subsection{Simulations on a Cone at its apex}

Using the parametrization $X(u,\theta)=(u\cos\theta,u\sin\theta,u)$ with $u>0$,
$\theta\in[0,2\pi)$, we have $\partial_u X=(\cos\theta,\sin\theta,1)$ and $\partial_\theta X=(-u\sin\theta,u\cos\theta,0)$, hence $g_{uu}=\langle\partial_u X,\partial_u X\rangle=2$, $g_{u\theta}=\langle\partial_u X,\partial_\theta X\rangle=0$, and $g_{\theta\theta}=\langle\partial_\theta X,\partial_\theta X\rangle=u^2$. Therefore the induced metric and area element are
\[
g \;=\; 2\,du^2 + u^2\,d\theta^2, 
\qquad
d\!\operatorname{vol}_g \;=\; \sqrt{2}\,u\,du\,d\theta.
\]

\begin{figure}[t]
    \centering
    \includegraphics[width=0.36\textwidth]{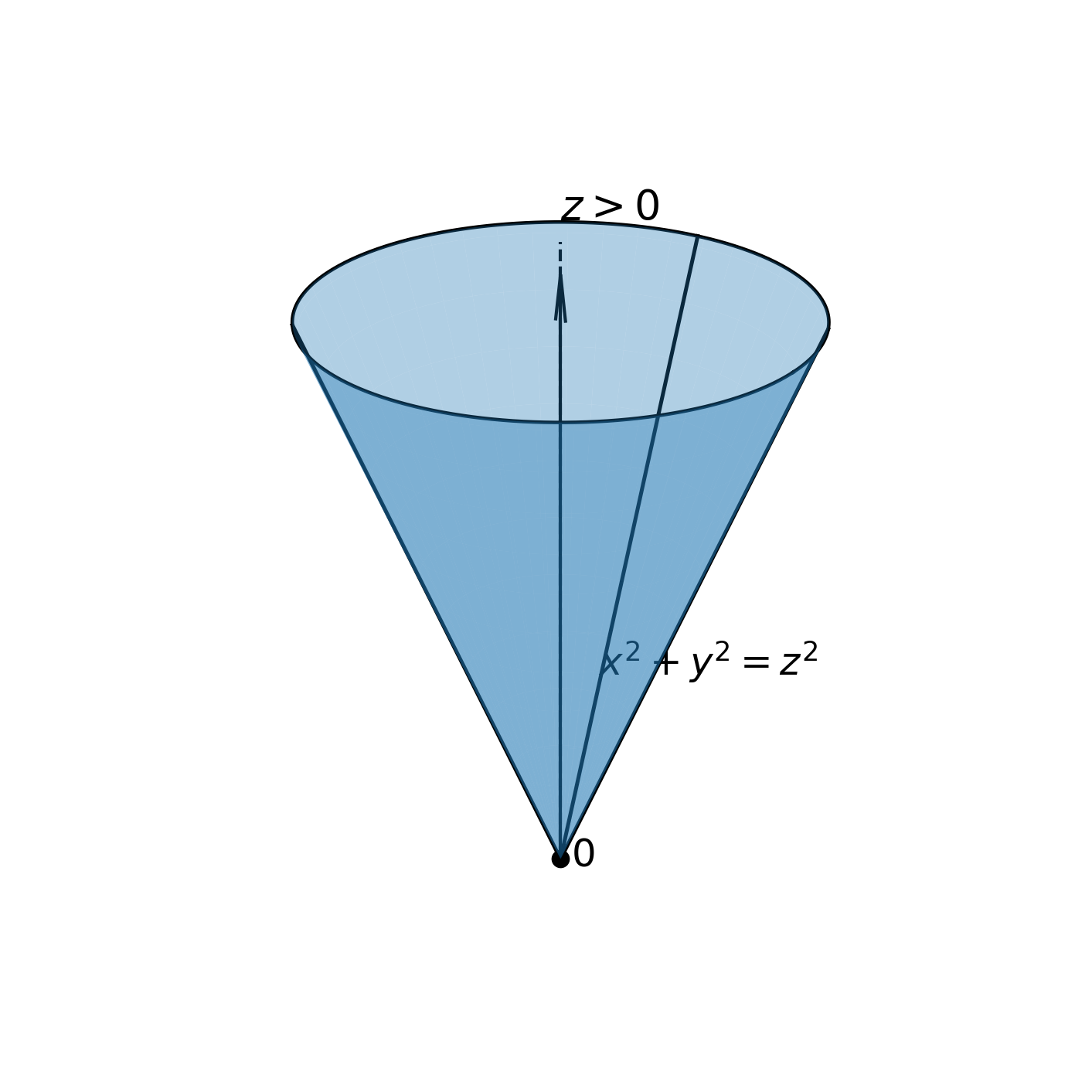}
    \caption{Simulation setup for the cone
    \(
    M=\{(x,y,z)\in\mathbb{R}^3:\ x^2+y^2=z^2,\ z>0\},
    \)
   }
    \label{fig:cone_apex_experiment}
\end{figure}

\nd The metric $g$ is defined on $M\setminus\{0\}$ and \emph{does not} extend smoothly at the apex.
In particular, the integration for the Gaussian operator below is over the \textit{regular part} $M\setminus\{0\}$; the point
$x=0$ (the apex) is used only as the \emph{evaluation point} of the operator.

\nd We consider the \emph{extrinsic kernel} Gaussian operator at $x=0$ with $d:=2$ here:
\[
L_t f(0)
\;=\; \frac{1}{t^{2}}\int_{M\setminus \{0\}}
e^{-\frac{\|y\|^2}{t}}\,(f(0)-f(y))\,d\!\operatorname{vol}_g(y),
\]
where $\|y\|$ is the Euclidean norm in $\mathbb{R}^3$.
Choosing $f(x,y,z)=x^2+y^2$ (so $f(0)=0$ and $f\circ X(u,\theta)=u^2$), we obtain the explicit formula for the \textit{extrinsic} graph Laplacian:
\[
L_t f(0)
\;=\;\frac{1}{t^{2}}\int_{0}^{2\pi}\!\!\int_{0}^{\infty}
e^{-\frac{2u^2}{t}}\,(0-u^2)\;\sqrt{2}\,u\;du\,d\theta.
\]
A direct calculation shows that this value is \emph{independent of $t$}:
\[
L_t f(0)\;=\;-\frac{\pi\sqrt{2}}{4}\;\approx\;-1.1107207345.
\]

\nd Thus, the asymptotics are exact in this example: \(L_t f(0)\) is independent of \(t\), so in particular \(\sqrt{t}\,L_t f(0)\to 0\) as \(t\downarrow 0\). Hence $L_tf$ does not become unbounded at the singularity as $t\to0$, contrary to the naive expectation that every non-removable singularity should produce blow-up of the continuous graph Laplacian, as happens for boundary-type singularities treated in \citet{BelkinQueWangZhou2012} and \citet{PalTewodrose2025}. In particular here, $\sqrt{t}\,L_t f(0)\to 0$ as $t\downarrow 0$, contrasting with the previous simulation in Table \ref{tab:scaled_GL}, where the same limit is non-zero.

\noindent\textbf{Clarification (domain vs.\ evaluation point).}
Throughout this experiment, the metric $g=2\,du^2+u^2d\theta^2$ and area element $d\!\operatorname{vol}_g=\sqrt{2}\,u\,du\,d\theta$
are defined on $M\setminus\{0\}$. The integral defining $L_t f(0)$ is taken over
$M\setminus\{0\}$, while $x=0$ is the \emph{evaluation point} in the kernel $e^{-\|x-y\|^2/t}$. This distinction is essential and
matches the non-removable nature of the cone tip (angle deficit), even though the Gaussian curvature is $0$ away from the tip. See Definition/Remark \ref{def:GL for loc. ang. conformal metrics} for a comparison, where the locally angularly conformal metrics (cf. Definition \ref{def:locally angularly conformal}) were also \textit{not} defined at the isolated singularity. Figure~\ref{fig:cone_apex_experiment} illustrates the simulation setup for the cone used in this example.

\bigskip

\section{Declarations}
\subsection{Funding}
This research was funded by the Research Foundation–Flanders (FWO)
via the Odysseus II programme no. G0DBZ23N.
\subsection{Competing interests/disclosure}
The authors declare no competing interests.
\subsection{Data Availability Statement}
The data that support the findings of this study are available from the corresponding author, [S.P.], upon reasonable request.
\subsection{Generative Artificial Intelligence (AI)}
The author used ChatGPT (OpenAI) for limited assistance with language and LaTeX polishing and for preliminary literature-search support. All mathematical content, proofs, results, scientific conclusions, and bibliographic references were independently checked and verified by the author.

%%%%%%%%%%%%%%%%%%%%%%%%%%%%%%%%%%%%%%%%%%%%%%%%%%%%%%%%%%%%%%%%%%%%%%%%%%%%%%%%%%%%%%%%%%%%%%%%%%%%%%%%%%%%%%%%%%%%%%%%%%%%%%%%%%%%%%%%%%%%%%%%%%%%%%%%%%%%%%%%%%%%%%%%%%%%%%%%%%%%%%%%%%%%%%%%%%%%%%%%%%%%%%%%%%%%%

\bibliographystyle{plainnat}   % or alpha, abbrv, amsplain, etc.
\bibliography{references}

\end{document}